\newcommand{\comment}[1]{}
\def\hhmm{\number\hh:\ifnum\mm<10{}0\fi\number\mm}
\theoremstyle{definition} 
\newtheorem{theorem}{Theorem}
\newtheorem{prop}[theorem]{Proposition}
\newtheorem{lemma}[theorem]{Lemma}
\newtheorem*{defi}{Definition}
\newtheorem*{exam}{Example}
\newtheorem*{ack}{Acknowledgement}
\newtheorem{cor}[theorem]{Corollary}
\newtheorem{cons}[theorem]{Construction}
\theoremstyle{remark} 
\newtheorem*{nota}{Notation}
\newcommand\brac[1]{\langle #1 \rangle}
\newcommand\sep{\,:\,}
\newcommand\st {\mathrel{\ooalign{$\,\backepsilon$\cr\lower .7pt\hbox{\kern 1pt$-\,$}}}}
\newcommand\qf[1]{\mathrm{Quot}(#1)}	
\newcommand\spec{\mathrm{Spec}\,}	
\newcommand\sper{\mathrm{Sper}\,}	
\newcommand\supp{\mathrm{supp}}	
\newcommand\id{\mathrm{id}}	
\newcommand\Hom{\mathrm{Hom}}
\newcommand\R{\mathbb{R}}
\newcommand{\N}{\mathbb{N}}
\newcommand{\p}{\mathfrak{p}}
\newcommand\wo[1]{\backslash{\{#1\}}}
\newcommand{\bij}{\rightarrowtail \hspace{-1.9ex} \rightarrow}
\newcommand*{\lhrarrow}{\ensuremath{\lhook\joinrel\relbar\joinrel\rightarrow}}
\newcommand*{\lthrarrow}{\ensuremath{\relbar\joinrel\twoheadrightarrow}}
\renewcommand\mod{\,\mathrm{mod}\,}
\newcommand{\ic}{\mathrm{ic}}
\def\tboxit#1#2{{\setbox0=\hbox{\kern5pt#1\kern5pt}\edef\titlewidth{\the\wd0}% 
                \setbox2=\vbox{#2} 
                \setbox2=\vbox{% 
                    \vbox to1pt{\vss\hbox 
to\wd2{\strut\hfil#1\hfil}\vskip0pt}% 
                    \box2 
                    } 
                \Tboxit{\titlewidth}{\Tboxit{\titlewidth}{\box2}}}} 
\def\Tboxit#1#2{\vbox{% 
    \setbox0=\hbox{\vrule\kern3pt\vbox{\kern3pt#2\kern3pt}\kern3pt\vrule}% 
    \hbox to\wd0{\hrulefill\kern#1\hrulefill}\nointerlineskip 
    \box0 
    \hrule 
    }}
\def\url@leostyle{%
  \@ifundefined{selectfont}{\def\UrlFont{\sf}}{\def\UrlFont{\small\ttfamily}}}
\begin{document}

%%%%%%%%%%%%%%% TITLE %%%%%%%%
\title{Uniqueness of real closure $*$ of regular rings}
\makeatletter
\let\mytitle\@title
\makeatother
\author{Jose Capco \\
\href{mailto:capco@fim.uni-passau.de}{\small{capco@fim.uni-passau.de}}\\
\emph{\small{Universit\"at Passau, Innstr. 33, 94032 Passau, Germany}} }
\date{}
\thispagestyle{empty}
\maketitle
%%%%%%%%%%%%%%%%%%%%%%%%%%%

%%%%%%%%%%% Page Format %%%%%%%%%%%%%%%%%%%%%%%%%%
%\pagestyle{myheadings}
\pagestyle{fancy}
\fancyhead[R]{\mytitle}
%\fancyhead[L]{\today \quad \hhmm}
\fancyhead[L]{J. Capco}
\tolerance=500
%%%%%%%%%%%%%%%%%%%%%%%%%%%%%%%%%%%%%%%%%%%%%%%%%%%

\begin{abstract}
In this paper we give a characterisation of real closure $*$ of regular rings, which is quite similar to the characterisation
of real closure $*$ of Baer regular rings seen in \cite{Capco2}. We also characterize Baer-ness of regular rings using \emph{near-open} 
maps. The last part of this work will concentrate on classifying the real closure $*$ of Baer and non-Baer regular rings (upto isomorphisms) 
using continuous sections of the support map, we construct a topology on this set for the Baer case. For the case of non-Baer regular rings, 
it will be shown that almost no information of the ring structure of the Baer hull is necessary
in order to study the real and prime spectra of the Baer hull. We shall make use of the absolutes of Hausdorff spaces in order to give a construction
of the spectra of the Baer hulls of regular rings. 
Finally we give example of a Baer regular ring that is not rationally complete.
%%Jose: Description incomplete!
\begin{description}
\item[Mathematics Subject Classification (2000):] Primary 13J25; Secondary 06E15, 16E50
\item[Keywords:] real closed $*$ rings, Baer von Neumann regular rings, absolutes of Hausdorff spaces, rational completeness, continuous sections, 
near open maps, compact-open topology, point convergence topology, Gleason spaces.
\end{description}
\end{abstract}

\footnote{Supported by Deutsche Forschungsgemeinschaft and Universit\"at Passau.}

%%%%%  change for Phdrcr3 %%%%%%5
%%%%%%%% Intro, Symbols & Notations %%%%%%%%%%%%%
Henceforth, when we say \emph{regular ring}, we mean a von Neumann regular ring. When we say ring, we usually mean commutative unitary 
partially ordered ring.
Poring is a ring $A$ that has a partial ordering $A^+$. 

We assume that the reader is familiar with the notations used in \cite{Capco} and \cite{Capco2}. However for completeness, here are a list 
of notations that may be used.
\begin{nota}
Let $A$ be a ring and $x\in A$
\begin{itemize}
\item If $A$ is a poring then $\sper A$ is the topological space (Harrison Topology) consisting of prime cones containing $A^+$
\item $E(A):=\{e\in A \sep e^2 =e\}$ is the set of the idempotents of $A$
\item $B(A)$ is the Baer hull of $A$, if $A$ is a poring with partial ordering $A^+$ then we use the partial ordering
$$B(A)^+:=\{\sum_{i=1}^n b_i^2a_i \sep n\in \N, b_i\in B(A), a_i\in A^+ \textrm{ for } i=1,\dots,n\} $$
for $B(A)$
\item $Q(A)$ will be the complete ring of quotients of $A$. If $A$ is a poring with partial ordering $A^+$, then we use the partial 
ordering
$$Q(A)^+:=\{\sum_{i=1}^n x_i^2a_i \sep n\in \N, x_i\in Q(A), a_i\in A^+ \textrm{ for } i=1,\dots,n\} $$
for $Q(A)$
\item $\mathbf{CRings}$ is the category of commutative unitary rings with the usual ring homomorphisms (i.e. 1 is mapped to 1)
\item $D_A(x):=\{\p\in \spec A : x\not\in \p\}$, if it is clear with what rings we are dealing with we write $D(x)$ instead.
\item If $A$ is a poring $P_A(x):=\{\alpha\in \sper A : x \in \alpha\backslash \supp(\alpha)\}$, we may also write $P(x)$.
\item Let $\alpha\in\sper A$ then by $\rho(\alpha)$ we mean the real closed field (upto $A/\supp(\alpha)$-isomorphism) that is 
algebraic over $\qf{A/\supp(\alpha)}$ and such that $\alpha/\supp(\alpha)$ is positive in it.
\end{itemize}
\end{nota}
%%%%%%%%%%%%%%%%%%%%%%%%%%%%%%%%%%%%%%%%%%%%%%%%%

First a few note about \cite{Capco2}. There we constantly made use of a certain Theorem by Storrer that involved essential extension
of rings, but we made use of a rather stronger statement of the original Theorem (which is also true). The original Theorem found in \cite{Storrer} Statz 10.1 
states that if $A$ is a semiprime ring and if $B$ is an essential extension 
of $A$, then there exists a monomorphism of rings $Q(A)\hookrightarrow Q(B)$. But when one looks at the proof of Storrer's Theorem (which we 
shall officially call the \emph{Storrer's Satz}) one has more to say. In fact it was first pointed out by Raphael, in \cite{raphael} 
Theorem 3.12, that Storrer's Satz can be strengthened in the following way $\dots$

\begin{theorem} (Storerr's Satz) Let $A$ be a semiprime ring and let $B$ be an essential extension of $A$. Then there exists a monomorphism of 
rings $f: Q(A)\rightarrow Q(B)$ such that the diagram below commutes (in the category $\mathbf{CRings}$)
$$
\begindc{\commdiag}[1]
\obj(0,50)[A]{$A$}
\obj(0,0)[B]{$B$}
\obj(100,50)[C]{$Q(A)$}
\obj(100,0)[D]{$Q(B)$}
\mor{A}{B}{}[1,3]
\mor{B}{D}{}[1,3]
\mor{A}{C}{}[1,3]
\mor{C}{D}{$f$}[1,3]
\enddc
$$
where the unlabeled maps in the commutative diagram above are all canonical maps.
\end{theorem}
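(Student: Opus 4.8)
The plan is to reduce the assertion to the module theory of rational (dense) extensions and then to invoke the universal property of the complete ring of quotients twice, once for $A$ and once for $B$; the only place where the semiprimeness of $A$ is really needed is the first reduction, and that is the step I expect to be the main obstacle.

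First I would record two preliminary facts. (i) $B$ is again semiprime: a nonzero nilpotent ideal of $B$ is a nonzero $A$-submodule of $B$, hence meets $A$ nontrivially since $A$ is essential in $B$, yielding a nonzero nilpotent ideal of $A$ --- impossible. (ii) The extension $A \subseteq B$ is \emph{rational}, i.e.\ $A$ is a dense $A$-submodule of $B$. Indeed, for $b,b' \in B$ with $b \neq 0$, set $J := \{a \in A : ab' \in A\}$; essentiality of $A$ in $B$ shows that $J$ is an essential ideal of $A$, semiprimeness of $A$ then gives $\ann_A(J)=0$, and one more appeal to essentiality upgrades this to $\ann_B(J)=0$, whence $Jb \neq 0$ --- exactly the density condition (some $a\in J$ has $ab\neq 0$ and $ab'\in A$).

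Now, since $Q(A)$ is the maximal rational extension of $A$ and $A \subseteq B$ is rational, there is a ring monomorphism $k : B \hookrightarrow Q(A)$ whose restriction to $A$ is the canonical map. Because $A$ is dense in $Q(A)$ and $A \subseteq k(B) \subseteq Q(A)$, the subring $k(B)$ is also dense in $Q(A)$; that is, $Q(A)$ is a rational extension of $B$ along $k$. The maximality of $Q(B)$ among rational extensions of $B$ then furnishes a ring monomorphism $f : Q(A) \hookrightarrow Q(B)$ with $f \circ k$ equal to the canonical map $B \hookrightarrow Q(B)$. Finally, a diagram chase finishes the proof: the canonical map $A \to Q(A)$ equals $k|_A$, so $f$ composed with $A \to Q(A)$ is $(f\circ k)|_A$, the restriction to $A$ of $B \to Q(B)$, which is the composite $A \to B \to Q(B)$; hence the square commutes, and $f$ is the desired monomorphism.

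I would add a remark that this merely repackages the construction implicit in Storrer's own proof, where $f$ is written down directly: an element of $Q(A)$ is the class of an $A$-linear map $\varphi : D \to A$ on a dense ideal $D$ of $A$; it extends to the $B$-linear map $\tilde\varphi : DB \to B$, $\sum d_i b_i \mapsto \sum \varphi(d_i) b_i$, where well-definedness uses the identity $\varphi(dd_i) = d_i\varphi(d)$ (for $d \in D$) together with $\ann_B(D)=0$ from the previous step; since $DB$ is dense in $B$ one may set $f([\varphi]) := [\tilde\varphi]$. That $f$ is an injective unital ring homomorphism and that the square commutes are then routine: $\tilde\varphi$ extends $\varphi$ (injectivity), the defining formula is compatible with sums and products of representatives, $\widetilde{\id_A} = \id_B$, and the multiplication-by-$a$ map on $A$ extends to multiplication by $a$ on $B$ (commutativity of the square).
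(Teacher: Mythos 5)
Your main argument stands or falls with step (ii), the claim that an essential extension of a commutative semiprime ring is automatically a rational (dense) extension, so that $B$ embeds into $Q(A)$ over $A$. Under the notion of essential extension in force here (the one used by Storrer, Raphael and throughout this paper: every nonzero ideal of $B$ meets $A$ nontrivially) this is false, and the gap is visible inside your own argument: to show that $J=\{a\in A \sep ab'\in A\}$ meets a given nonzero ideal $I$ of $A$ you would need a nonzero element of $Ib'\cap A$, but ring-essentiality only yields nonzero elements of $(Ib')B\cap A$, i.e.\ elements $\sum_k a_k b' x_k$ with $x_k\in B$, which need not have the form $ab'$ with $a\in I$; what you are tacitly using is the strictly stronger module-theoretic (\emph{large}) hypothesis that every nonzero $A$-submodule of $B$ meets $A$. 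Concretely, $A=\Q\subset B=\Q(\sqrt{2})$ (or $\Z\subset\Z[\sqrt{2}]$) satisfies the hypotheses of the theorem, yet $J=\{a\in\Q \sep a\sqrt{2}\in\Q\}=0$ is not essential, $B$ is not a rational extension of $A$, and no monomorphism $k:B\to Q(A)=\Q$ exists at all, so the factorization through $Q(A)$ cannot be repaired. A further tell: if ``essential'' meant the module-theoretic notion, your argument would prove $Q(A)\cong Q(B)$, far more than the stated monomorphism --- and that stronger conclusion is false exactly in the situations this paper applies the theorem to, since a real closure $*$ (e.g.\ the real closure of $\Q$) is essential only in the ideal-theoretic sense and has a strictly larger complete ring of quotients.

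The viable route is the one you relegate to your closing remark, and it is essentially what the paper does: the paper omits the proof, citing Storrer's Satz 10.1, and records the construction, in which $\varphi:D\to A$ (with $D\lessdot A$) is extended over a dense ideal of $B$ --- Storrer via a maximal family $\{d_i\}$ with $\oplus_I d_iA$ direct and dense, extended to $\oplus_I d_iB$; your variant $\tilde\varphi:DB\to B$, $\sum d_ib_i\mapsto\sum\varphi(d_i)b_i$, is also workable, and its key ingredient $\ann_B(D)=0$ does follow from ideal-essentiality alone (if $Dx=0$ with $x\neq 0$, pick $0\neq xb\in xB\cap A$; then $xb$ is a nonzero element of $A$ annihilating $D$, contradicting density of $D$ in $A$). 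But as written this is an afterthought whose justification is borrowed ``from the previous step'', i.e.\ from the flawed reduction; to have a proof you must promote this construction to the main argument and verify directly --- never invoking an embedding of $B$ into $Q(A)$ --- that it is well defined on equivalence classes, additive and multiplicative, injective, and that it restricts to the canonical map on $A$, which is precisely the content of Storrer's original proof that the paper points to.
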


The proof of the above Theorem is omitted as it is already manifest in the proof of the original Theorem 
made by Storrer (\cite{Storrer} Satz 10.1). I have already made several use of this new form of the Theorem in my paper \cite{Capco2}.
This form of Storrer's Satz will be used very often in the future as well. By the way we assumed the partial orderings of our 
complete ring of quotients (i.e. they have the weakest partial ordering such that they contain the partial ordering of the original poring) we
at once see that Storrer's Satz also holds in the category of porings. That is, we can assume our rings to be porings and our 
ring homomorphisms to be poring morphisms.

\begin{cons}
For completeness, we shall write down how the monomorphism in the Storrer's Satz above is constructed.

For any ring $A$, there is a ring monomorphism $A\hookrightarrow Q(A)$ (see \cite{lambek} \S2.3 Proposition 6 p.40). We may also write 
$$Q(A)=\stackrel{\textstyle .}{\bigcup_{D\lessdot A}}\Hom_A(D,A)/\sim_A$$
where $\sim_A$ is a specific equivalence relation and $D\lessdot A$ means that $D$ \emph{is a dense ideal of A}. For readers unfamiliar with the terminology and concept used in the study
of the complete ring of quotients of rings, I suggest \cite{roqrof} \S1 and \cite{lambek} \S2.3 and \S2.4 p.36-46 as reference.

Henceforth, for any ring $A$ and for any $\phi\in \stackrel{\textstyle .}{\bigcup}\Hom_A(D,A)$ we write $[\phi]_A$ to mean the canonical
image of $\phi$ in $Q(A)$.

Now we are ready to make the construction. Let $A$ and $B$ satisfy the condition of the Storrer's Satz. Let $\phi: D\rightarrow A$ be
a module morphism with $D$ a dense ideal of $A$. Storrer showed the following
\begin{enumerate}
\item There is a maximal family $\{d_i\}_I \subset D$ such that $\oplus_I d_iA$ is a direct sum and is dense in $A$
\item $\overline D := \oplus_I d_i B$ is then a direct sum and is dense in $B$
\item We then associate $[\phi]_A$ to $[\overline \phi ]_B$  where
$$\overline \phi := \oplus_I \phi_i : \overline D \longrightarrow B$$
with $\overline \phi_i : d_iB\rightarrow B$ defined by $\phi_i(d_i):=\phi(d_i)\in A\subset B$. This association turns out to be not only a 
well-defined function between $Q(A)$ and $Q(B)$, but also a ring monomorphism satisfying the Storrer's Satz above.
\end{enumerate}
\begin{flushright}$\blacksquare$\end{flushright}
\end{cons}

There is another result by Raphael which I have made use in \cite{Capco2} and I will also make constant use of it hereafter. The result 
I shall call \emph{Raphael's Lemma} whose proof is a combination of proofs found (but not formally stated) in \cite{raphael} 
Lemma 1.14, Proposition 1.16 and Remark 1.17.

\begin{lemma} (Raphael's Lemma) If $A$ is a regular Baer ring and $B$ is a regular ring which is an essential extension of $A$ then 
$B$ is also Baer and we have a canonical homeomorphism 
$$\phi : \spec B \rightarrow \spec A\qquad \p \mapsto \p \cap A$$ 
whose inverse is 
$$\phi^{-1} : \spec A \rightarrow \spec B \qquad \mathfrak q \mapsto \mathfrak q B$$
\end{lemma}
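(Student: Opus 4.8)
The strategy is to reduce both claims to a single fact: the inclusion $A\hookrightarrow B$ restricts to an isomorphism of Boolean algebras $E(A)\cong E(B)$. Granting this, the rest is routine. Since $B$ is regular it is Rickart (for $b\in B$ pick $x$ with $b=b^2x$; then $bx\in E(B)$ and $\ann_B(b)=(1-bx)B$), and $E(B)=E(A)$ is a complete Boolean algebra because $A$ is Baer, so $B$ is a regular ring with complete idempotent algebra, hence Baer. The map $\phi$ is $\spec$ of the inclusion, hence continuous; and in a regular ring every prime is recovered from the idempotents it contains (as $b\in\mathfrak p$ iff $bx\in\mathfrak p$ for the support idempotent $bx$ of $b$), so $\mathfrak p\mapsto\mathfrak p\cap E(B)$ separates the points of $\spec B$, and via $E(A)=E(B)$ this forces $\phi$ to be injective. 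Both $\spec A$ and $\spec B$ are Boolean spaces, so once I exhibit $\mathfrak q\mapsto\mathfrak qB$ as a set-theoretic inverse of $\phi$, continuity of $\phi$ together with compactness of $\spec B$ and the Hausdorff property of $\spec A$ will make $\phi$ a homeomorphism automatically.

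I expect the isomorphism $E(A)\cong E(B)$ to be the main obstacle; only surjectivity is in question, i.e.\ that an arbitrary $e\in E(B)$ already lies in $A$. Here is how I would argue. Put $N:=\ann_A(e)=(1-e)B\cap A$, an ideal of $A$, and use the Baer hypothesis to write $\ann_A(N)=\epsilon A$ with $\epsilon\in E(A)$; a one-line computation shows $eB\cap A\subseteq\ann_A(N)=\epsilon A$. It then suffices to prove $\epsilon=e$, and I would do this by two applications of essentiality combined with the existence of support idempotents in the regular ring $B$. If $(1-e)\epsilon\ne 0$, let $g$ be a support idempotent with $gB=(1-e)\epsilon B$; then $gB$ is a nonzero ideal of $B$, so by essentiality it contains a nonzero $c\in A$, and unwinding $c\in gB$ one checks that $c\in N$ while $\epsilon c=c$, contradicting $\epsilon\in\ann_A(N)$. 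Symmetrically, if $e(1-\epsilon)\ne 0$, a support idempotent of $e(1-\epsilon)$ yields a nonzero $c\in eB\cap A\subseteq\epsilon A$ with $\epsilon c=0$, again absurd. Hence $(1-e)\epsilon=0=e(1-\epsilon)$, so $e=e\epsilon=\epsilon\in E(A)$. (Alternatively, one could route this through Storrer's Satz, using $A\subseteq B\subseteq Q(A)$ and the fact that $E(Q(A))$ is the Dedekind--MacNeille completion of $E(A)$, which equals $E(A)$ since $A$ is Baer; but the direct computation seems shorter.)

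It remains to check that $\mathfrak q\mapsto\mathfrak qB$ is the inverse of $\phi$. Fix $\mathfrak q\in\spec A$. The crucial point is $\mathfrak qB\cap A=\mathfrak q$: if some $a\in\mathfrak qB\cap A$ were not in $\mathfrak q$, then its support idempotent $\epsilon\in E(A)$ would lie in $\mathfrak qB$ but not in $\mathfrak q$; writing $\epsilon$ as a $B$-linear combination of finitely many elements of $\mathfrak q$ and absorbing them into the join $f\in\mathfrak q$ of their support idempotents gives $\epsilon=f\epsilon\in\mathfrak q$, a contradiction. In particular $\mathfrak qB\ne B$. Moreover $B/\mathfrak qB$ is regular, and a short argument with support idempotents (using $E(B)=E(A)$ and $\mathfrak qB\cap A=\mathfrak q$) shows it has no nontrivial idempotents, hence is a field; thus $\mathfrak qB$ is maximal, in particular prime, and $\phi(\mathfrak qB)=\mathfrak qB\cap A=\mathfrak q$. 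Since $\phi$ is injective, $\mathfrak q\mapsto\mathfrak qB$ is a genuine two-sided inverse of $\phi$, and as noted this makes $\phi$ a homeomorphism, completing the proof.
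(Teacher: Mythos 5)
Your argument is correct, but note that the paper itself does not prove this lemma: it is quoted from Raphael, with the proof deferred to the combination of Lemma 1.14, Proposition 1.16 and Remark 1.17 of \cite{raphael}. So there is no in-paper proof to match step by step; what you give is a self-contained replacement, and it checks out. Your engine is the single claim $E(B)=E(A)$, proved by playing essentiality (you only use that every nonzero ideal of $B$ meets $A$, so your proof works under the weakest reading of ``essential'') against support idempotents of the regular ring $B$ and the Baer property of $A$; the computation $eB\cap A\subseteq \ann_A(N)=\epsilon A$ and both contradiction cases are sound. The remaining steps are standard but correctly assembled: a commutative regular ring whose idempotents form a complete Boolean algebra is Baer (this is exactly Mewborn's characterization, cf. \cite{Mewborn}, which the paper invokes elsewhere); primes of a regular ring are determined by the idempotents they contain, giving injectivity of $\phi$; $\mathfrak q B\cap A=\mathfrak q$ via the join of the support idempotents of a finite generating set; idempotents of $B/\mathfrak q B$ lift to $E(B)=E(A)$ (in a commutative regular ring the support idempotent $e=xy$ of $x$, where $x=x^2y$, satisfies $e\equiv x$ modulo any ideal containing $x^2-x$), so $B/\mathfrak q B$ is a nonzero regular ring without nontrivial idempotents, hence a field, making $\mathfrak q B$ prime; and compactness of $\spec B$ plus Hausdorffness of $\spec A$ upgrade the continuous bijection to a homeomorphism. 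Compared with the route through Raphael's development (algebraic extensions of regular rings and quotient-ring machinery), yours is more elementary and stays entirely inside $B$. One caution: the parenthetical alternative via Storrer's Satz asserts $A\subseteq B\subseteq Q(A)$, but an essential ring extension need not embed in $Q(A)$ --- any proper field extension is essential in the ideal sense while $Q(A)=A$ --- so that aside fails as stated; since you discard it in favor of the direct computation, the proof itself is unaffected.
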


\begin{lemma} \label{rcs_of_BHull} Let $A$ be a real regular ring and let $C$ be a real closure $*$ of $A$, then
\begin{enumerate}
\item $C$ can be regarded as a real closure $*$ of $B(A)$
\item The spectral map $\spec C\rightarrow \spec B(A)$ induced from 1. is a homeorphism.
\end{enumerate}
\end{lemma}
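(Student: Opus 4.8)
The plan is to prove part 1 by realizing $C$ as a subring of $Q(A)$ with $B(A)$ squeezed between $A$ and $C$, and then to obtain part 2 at once from Raphael's Lemma, using crucially that $B(A)$ is Baer. I would begin by recording two standing facts. Since $A$ is von Neumann regular it is nonsingular, so $Q(A)$ is the injective hull of $A$ as an $A$-module and hence a maximal essential extension of $A$; because a real closure $*$ of $A$ is in particular an essential extension $A\hookrightarrow C$ (and, $A$ being regular, $C$ is again regular — a fact I take from the theory of real closures $*$ of regular rings), this inclusion extends to an embedding $C\hookrightarrow Q(A)$, and I identify $C$ with its image, so $A\subseteq C\subseteq Q(A)$. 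Secondly, $B(A)$ is a real regular ring: it is regular because it is generated over the regular ring $A$ by idempotents of $Q(A)$ — any element can be written $\sum_j f_j a_j$ over a finite orthogonal family of idempotents $f_j$ with $a_j\in A$, and a quasi-inverse is assembled from quasi-inverses of the $a_j$ in $A$ — and it is real with $B(A)^+$ a genuine partial ordering since $B(A)\subseteq Q(A)$.

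The heart of part 1, and the step I expect to be the main obstacle, is to show $B(A)\subseteq C$ inside $Q(A)$. Here I would use that $B(A)$ is the smallest Baer subring of $Q(A)$ containing $A$, obtained by transfinitely adjoining to $A$, for each element $a$ of the ring built so far, the idempotent $g_a\in E(Q(A))$ with $(1-g_a)Q(A)=\ann_{Q(A)}(a)$; it suffices to show each such $g_a$ already lies in $C$. Since $C$ is regular, $\ann_C(a)=eC$ for a unique $e\in E(C)$, and $ea=0$ gives $e\le 1-g_a$. For the reverse inequality, $\ann_A(a)=\ann_{Q(A)}(a)\cap A\subseteq\ann_C(a)=eC$, and $\ann_A(a)$ is essential in the direct summand $(1-g_a)Q(A)$ of $Q(A)$ (restrict the essential extension $A\subseteq Q(A)$ to a summand), so $(1-g_a)Q(A)$, being the injective hull of $\ann_A(a)$ formed inside $Q(A)$, lies in the injective submodule $eQ(A)$; hence $1-g_a\le e$, so $e=1-g_a$ and $g_a\in C$. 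As $C$ remains an essential extension of every intermediate ring in the tower (each having complete ring of quotients $Q(A)$), the whole tower lies in $C$, i.e. $B(A)\subseteq C$; and $B(A)\hookrightarrow C$ is an essential extension of porings — essential since $A\subseteq B(A)\subseteq C$ with $A$ essential in $C$, order-compatible since $A^+\subseteq C^+$ and $C^+$ absorbs squares, so $B(A)^+\subseteq C^+$.

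To finish part 1 I would check that $C$ with this embedding of $B(A)$ is genuinely a real closure $*$ of $B(A)$: being real closed $*$ is a property of $C$ alone, and the minimality/universal property transfers because $A$, $B(A)$, $C$ are nested — any real closed $*$ ring $D$ with $B(A)\subseteq D\subseteq C$ has $A\subseteq D\subseteq C$, forcing $D=C$, and dually for the mapping-out formulation, using that a morphism into a Baer real closed $*$ ring must send $g_a$ to the annihilator idempotent of the image of $a$. Part 2 is then immediate: $B(A)$ is a regular Baer ring and $C$ a regular essential extension of it, so Raphael's Lemma yields that $C$ is Baer and that $\spec C\to\spec B(A)$, $\p\mapsto\p\cap B(A)$, is a homeomorphism with inverse $\mathfrak{q}\mapsto\mathfrak{q}C$ — precisely the spectral map induced in part 1. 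The only non-formal step is the inclusion $B(A)\subseteq C$, i.e. that the annihilator idempotents generating $B(A)$ inside $Q(A)$ are already present in $C$; everything else is bookkeeping with essential extensions plus a direct appeal to Raphael's Lemma, with the one further input being the regularity of $C$ itself (so that $\ann_C(a)$ is idempotent-generated).
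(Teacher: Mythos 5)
There is a genuine gap, and it is located exactly where you predicted the main obstacle would be, but it occurs one step earlier than you think: the ambient ring is wrong. You place $C$ inside $Q(A)$ by arguing that $Q(A)$ is a maximal essential extension of $A$ and that $A\hookrightarrow C$ is essential. But the essentiality in the definition of a real closure $*$ is \emph{ring}-essentiality (every nonzero ideal of $C$ meets $A$), not essentiality of $A$-modules, and only the latter would let you embed $C$ into the injective hull $Q(A)$; moreover an extension of $A\hookrightarrow Q(A)$ obtained from injectivity alone need be neither injective nor multiplicative. The claim $A\subseteq C\subseteq Q(A)$ is in fact false in general: take $A=\Q$, a real regular ring with $Q(A)=\Q$, whose real closure $*$ is the real closure of $\Q$ as a field, which does not embed into $Q(A)$ at all. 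Integral extensions of this kind are simply not rings of quotients of $A$. The correct containment goes the other way, via Storrer's Satz: $Q(A)\hookrightarrow Q(C)$, and the paper works inside $Q(C)$, using that $C$ is Baer (Theorem 15 of \cite{Capco}) and that a Baer regular ring contains all idempotents of its complete ring of quotients (Proposition 2 of \cite{Capco}, Mewborn), so $E(Q(A))\subseteq E(Q(C))\subseteq C$ and hence $B(A)=A[E(Q(A))]\subseteq C$.

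A second, independent problem is your description of $B(A)$ as obtained by transfinitely adjoining the annihilator idempotents $g_a$ of single elements. In a commutative regular ring the annihilator of a single element is already generated by an idempotent of the ring itself ($\ann(a)=(1-aa')A$ with $a'$ the quasi-inverse), and this same idempotent generates $\ann_{Q(A)}(a)$; so your transfinite process never leaves $A$, while $A$ need not be Baer. Baer-ness of a regular ring concerns annihilators of arbitrary subsets (equivalently, completeness of the Boolean algebra of idempotents), and the Baer hull is generated by $A$ together with \emph{all} idempotents of $Q(A)$, which is why the paper's appeal to Baer-ness of $C$ (rather than mere regularity) is indispensable: your argument never uses that $C$ is Baer, and regularity of $C$ alone cannot supply the suprema of infinite families of idempotents that $B(A)$ requires. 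Your part 2 (Raphael's Lemma applied to the regular Baer ring $B(A)$ and its regular essential extension $C$) agrees with the paper and is fine once part 1 is repaired along the paper's lines; the order-theoretic remark that $B(A)^+\subseteq C^+\cap B(A)$ because $B(A)^+$ is the weakest extension of $A^+$ is also the same as in the paper.
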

\begin{proof}
By Storrer's Satz, we have the following commutative diagram of rings
$$
\begindc{\commdiag}[1]
\obj(0,50)[A]{$A$}
\obj(0,0)[B]{$C$}
\obj(100,50)[C]{$Q(A)$}
\obj(100,0)[D]{$Q(C)$}
\mor{A}{B}{}[1,3]
\mor{B}{D}{}[1,3]
\mor{A}{C}{}[1,3]
\mor{C}{D}{}[1,3]
\enddc
$$
We can thus regard all the given rings as subrings of $Q(C)$. By Theorem 15 of \cite{Capco} we know that $C$ is Baer, thus by Proposition 
2 in \cite{Capco} $C$ contains all the idempotents of $Q(C)$. Specifically, $C$ contains $A$ and all the idempotents of $Q(A)$. But $A$ 
and the idempotents of $Q(A)$ together generate $B(A)$. Therefore $B(A)$ may indeed be regarded as a subring of $C$. We originally 
had $B(A)^+$ constructed in such a way that it is the partial ordering of $B(A)$ which is the weakest extension of $A^+$ (see \cite{Brum}
\S1.3 p.34-35). Thus $C^+\cap B(A)\supset B(A)^+\supset A^+$ and 
therefore $B(A)$ can in fact be regarded as a subporing of $C$. We thus have the following extension of porings
$$A\lhrarrow B(A)\lhrarrow C$$
we also know that $C$ is an integral and essential extension of $A$ meaning that it is also an integral and essential extension of $B(A)$.
$C$ being real closed $*$ implies that $C$ is indeed a real closure $*$ of $B(A)$. By Raphael's Lemma, 
$\spec C\rightarrow \spec B(A)$ is a homeomorphism.
\end{proof}

\begin{lemma}\label{Baerprime}
Let $A$ be a real regular ring and let $B,C$ be two real closure $*$ of $A$ such that they are not $A$-isomorphic. Then there exists
$\p\in \spec B(A)$ such that 
$$B/\p B \not\cong_{A/\p\cap A} C/\p C$$
\end{lemma}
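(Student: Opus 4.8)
The statement is essentially the contrapositive of a uniqueness criterion: if the "fibrewise real closed fields" $B/\p B$ and $C/\p C$ agree over $A/\p\cap A$ at every prime $\p$ of $B(A)$, then $B$ and $C$ are already $A$-isomorphic. So I would argue by contraposition. Suppose that for every $\p\in\spec B(A)$ we have an $A/\p\cap A$-isomorphism $B/\p B \cong C/\p C$; the goal is to glue these into a global $A$-isomorphism $B\cong C$. The first step is to set the stage using Lemma~\ref{rcs_of_BHull}: both $B$ and $C$ are real closures $*$ of $B(A)$, and the spectral maps $\spec B\to\spec B(A)$ and $\spec C\to\spec B(A)$ are homeomorphisms (this is where Raphael's Lemma enters). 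Hence it is harmless to index everything by $\spec B(A)$, and via the homeomorphisms a prime $\p$ of $B(A)$ corresponds to $\p B\in\spec B$ and $\p C\in\spec C$ with $\p B\cap B(A)=\p=\p C\cap B(A)$.

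\smallskip
\textbf{Key steps.} (1) For a regular ring $R$ the residue fields $R/\q$ are exactly the stalks of the associated sheaf, and because $R$ is real closed $*$ each $R/\q=\rho(\alpha)$ for the unique real prime cone $\alpha$ over $\q$; so $B/\p B$ and $C/\p C$ are real closed fields, each containing the (not necessarily real-closed) field $B(A)/\p$, and each is algebraic over $\qf{B(A)/\p}$. A real closed field that is algebraic over a given ordered subfield is unique up to a unique order isomorphism fixing that subfield — this rigidity is the engine. (2) Therefore the hypothesised isomorphism $B/\p B\to C/\p C$ fixing $A/\p\cap A$ is forced to fix all of $B(A)/\p$ (since $B(A)$ is generated over $A$ by idempotents of $Q(A)$, which map into $\Z/\p\cap\Z\subseteq$ prime field, hence are automatically fixed), and moreover it is the \emph{unique} such isomorphism. (3) Now assemble: a morphism $B\to C$ of regular rings over $B(A)$ is the same as a compatible family of stalk maps $B/\p B\to C/\p C$ over the homeomorphic base, i.e. a section of a sheaf of isomorphisms; by step (2) this section exists and is unique, giving a ring isomorphism $\psi:B\to C$ over $B(A)$, in particular over $A$. (4) Check $\psi$ is a poring isomorphism: $B^+$ is recovered from the orderings on the residue fields (an element is $\geq0$ iff it is $\geq 0$ in every real stalk, as the support of the unique real prime cone determines the order on a real closed $*$ regular ring), and $\psi$ preserves these, so $\psi(B^+)=C^+$. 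This contradicts the assumption that $B$ and $C$ are not $A$-isomorphic, proving the lemma.

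\smallskip
\textbf{Main obstacle.} The delicate point is step (3): turning the pointwise isomorphisms into a genuine ring homomorphism $B\to C$. One has to check that the family $\{B/\p B\xrightarrow{\sim}C/\p C\}$ is "continuous" — that it comes from an actual element of $C$ for each element of $B$, not just from an abstract choice at each prime. The cleanest route is to avoid choices entirely: use the \emph{uniqueness} from the rigidity of real closures of ordered fields to define, for $b\in B$, the candidate image as the unique element of $C$ whose residue at each $\p$ equals the image of the residue of $b$; one must verify this is well-defined (the target residues are coherent because $b$ itself is coherent and each stalk map is canonical) and lands in $C$ rather than merely in $\prod_\p C/\p C$. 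Here I would lean on $C$ being regular, hence a ring of global sections over $\spec C$, together with the fact that $B\to\prod_\p B/\p B$ is injective and the image is the ring of sections of the structure sheaf; the homeomorphism of spectra then transports sections to sections. An alternative, perhaps smoother, packaging is to use the universal property of the real closure $*$ directly: $C$ being a real closure $*$ of $B(A)$ means any poring map from $B(A)$ into a real closed $*$ ring that is "tight" factors through $C$; feeding in $B(A)\hookrightarrow B$ and using that $B$ is real closed $*$ should produce a map $C\to B$ over $B(A)$, and symmetry plus uniqueness makes it an isomorphism — but making "tight/essential" hypotheses line up is exactly where the residue-field agreement hypothesis is needed, so the fibrewise argument cannot be entirely bypassed.
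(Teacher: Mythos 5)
Your contrapositive strategy is viable in principle, but its heart, your step (3), is exactly the point you leave unproven, and it is not a routine verification. A family of residue-field isomorphisms $\sigma_\p : B/\p B \to C/\p C$, even when each one is unique over $A/\p\cap A$ (and hence, as you correctly observe via the idempotents, over $B(A)/\p$), is not the same thing as a ring homomorphism $B\to C$: when you transport $b\in B$ to the element of $\prod_\p C/\p C$ whose $\p$-component is $\sigma_\p$ applied to the residue of $b$, nothing in your sketch shows that this element actually lies in (the image of) $C$. This is precisely where the hypotheses that $B$ and $C$ are \emph{integral and essential} over $B(A)$ must do work --- fibrewise data alone cannot force it, since different subrings of a product of real closed fields can have identical fibres --- and carrying it out amounts to re-proving Theorem 8 of \cite{Capco2}, which is the black box this paper deliberately relies on. You name this as the ``main obstacle'' and sketch two possible routes (section-transport via the structure sheaf, or a universal-property argument), but neither is executed, and you yourself note that the second cannot bypass the fibrewise issue; so the proof as written is incomplete at its central step.

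For comparison, the paper's own proof is a short reduction rather than a gluing argument: it invokes Theorem 8 of \cite{Capco2} in the strong form that non-$A$-isomorphy of $B$ and $C$ yields a single $x\in\spec A$ such that $B/y\not\cong_{A/x} C/z$ for \emph{every} pair of primes $y,z$ lying over $x$; then, using Lemma \ref{rcs_of_BHull} and Raphael's Lemma, it takes $\p:=y_x\cap B(A)$ for some prime $y_x$ of $B$ over $x$, notes that $\p B$ and $\p C$ are primes lying over $x$, and concludes. If you replace your from-scratch gluing by a citation of that theorem (applied either over $A$ in the form just described, or over the Baer ring $B(A)$ together with your reduction from ``isomorphic over $A/\p\cap A$'' to ``isomorphic over $B(A)/\p$''), your argument closes immediately; as it stands, step (3) is a genuine gap.
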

\begin{proof}
Set $X:=\spec A, Y:=\spec B$ and $Z:=\spec C$. By Lemma \ref{rcs_of_BHull}, we regard $B(A)$ as a subporing of both $B$ and $C$ and we know
then that $\spec B$ and $\spec C$ are (canonically) homeomorphic to $\spec B(A)$. By Theorem 8 in \cite{Capco2} there is an $x\in X$  such 
that 

\vspace{5mm}
\tboxit{Property $\star$}{
for all $y_x\in Y$ and $z_x\in Z$ that lie over $x$ (i.e. $y_x\cap A=z_x\cap A=x$) we get
$$B/y_x \not\cong_{A/x} C/z_x$$
}
\vspace{5mm}

Fix an $x\in X$ with the above property and choose $y_x\in Y$ lying over $x$ (this can be done, since the spectral map 
$Y\rightarrow X$ is a surjective one, see for instance \cite{raphael} Lemma 1.14). Now consider $\p:=y_x\cap B(A) \in \spec B(A)$ then 
$\p B\in \spec B$ and $\p C\in \spec C$  (by Raphael's Lemma) that lie over $x$ and so by Property $\star$
$$B/\p B \not\cong_{A/x} C/\p C$$
\end{proof}

\begin{defi}
Let $f:X\rightarrow Y$ be a function between topological spaces $X$ and $Y$. This function will be called a \emph{near open} (or 
\emph{near-open}) function (German: \emph{fast offene} Abbildung) iff
for all nonempty opens set $U\subset X$ there exists a nonempty open set $V\subset Y$ such that $V\subset f(U)$
\end{defi}

\begin{exam}\indent\par
\begin{enumerate}
\item Let $\R$ be the real numbers endowed with the usual Euclidean topology. Let $f:\R \rightarrow \R$ be defined by $f(x)=x^2$. 
Then this function is a continuous function that is near open however it is not open , because for instance $f((-1,1))=[0,1)$.
\item As will be seen in Theorem \ref{nearopen}, if $A$ is a von Neumann regular ring that is not Baer, then the canonical map
$\spec B(A) \rightarrow \spec A$ is a continuous near open map between Stone spaces that is not open.
\end{enumerate}
\end{exam}

\begin{lemma} \label{vNr+overring}
Let $A$ be a von Neumann regular ring and let $B$ be an overring of $A$. Set 
$$\phi:\spec B \rightarrow \spec A \qquad \p\mapsto \p\cap A$$
Then for any $a\in A$ we have the identity
$$\phi(D_B(a))=D_A(a)$$
\end{lemma}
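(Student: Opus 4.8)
I would prove the two inclusions of the claimed identity separately. The inclusion $\phi(D_B(a))\subseteq D_A(a)$ is immediate: if $\p\in\spec B$ with $a\notin\p$, then, $a$ being an element of $A$, we get $a\notin\p\cap A$, i.e. $\phi(\p)\in D_A(a)$; nothing beyond $a\in A$ is needed here. All the substance lies in the reverse inclusion $D_A(a)\subseteq\phi(D_B(a))$, which is a lying-over statement sharpened by the fact that $A$ is zero-dimensional.

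For that inclusion, fix $\mathfrak q\in\spec A$ with $a\notin\mathfrak q$; the task is to exhibit $\p\in\spec B$ with $\p\cap A=\mathfrak q$ and $a\notin\p$. The plan is to set $S:=A\setminus\mathfrak q$, a multiplicatively closed subset of $A$ and hence of $B$ (here one uses $A\subseteq B$), and to work with the localisation $S^{-1}B$. The first point to check is that $S^{-1}B\neq 0$: were it zero, then $s\cdot 1=0$ in $B$ for some $s\in S$, i.e. $s=0$ in $B$; but $A\hookrightarrow B$ is injective and $s\neq 0$ in $A$ since $0\in\mathfrak q$ while $s\notin\mathfrak q$. (This is the only place the word ``overring'' is used.) Consequently $S^{-1}B$ has a prime ideal, whose contraction $\p$ to $B$ meets $S$ trivially, so that $\p\cap A\subseteq A\setminus S=\mathfrak q$.

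The final step is to promote $\p\cap A\subseteq\mathfrak q$ to an equality, and here the von Neumann regularity of $A$ is decisive: $\p\cap A$ is a prime of $A$, hence maximal, so $\p\cap A\subseteq\mathfrak q$ forces $\p\cap A=\mathfrak q$, i.e. $\phi(\p)=\mathfrak q$. Since $a\notin\mathfrak q=\p\cap A$ we also have $a\notin\p$, whence $\p\in D_B(a)$ and $\mathfrak q=\phi(\p)\in\phi(D_B(a))$.

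The one step I expect to be the real (if small) obstacle is precisely the upgrade $\p\cap A\subseteq\mathfrak q\Rightarrow\p\cap A=\mathfrak q$: without zero-dimensionality of $A$ the contraction of a prime disjoint from $S$ could be a proper sub-prime of $\mathfrak q$, and in fact for a general overring the map $\spec B\to\spec A$ need not be surjective at all. No regularity of $B$ is needed anywhere in the argument. Finally, applying the identity with $a=1$ recovers the surjectivity of $\phi$ as a special case.
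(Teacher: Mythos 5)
Your proof is correct, and for the substantive half it takes a more self-contained route than the paper. The paper's argument has the same skeleton --- the inclusion $\phi(D_B(a))\subseteq D_A(a)$ is dismissed exactly as you do, and for the reverse inclusion it fixes $\mathfrak q\in D_A(a)$, invokes the surjectivity of $\phi$ by citing Raphael's Lemma 1.14, picks any $\p$ lying over $\mathfrak q$, and observes that $a\in\p$ would force $a\in\p\cap A=\mathfrak q$, a contradiction. You instead prove the lying-over statement from scratch: localize $B$ at $S=A\setminus\mathfrak q$, note $S^{-1}B\neq 0$ because elements of $S$ are nonzero in $A$ and hence in $B$ (this is indeed the only place the hypothesis $A\subseteq B$ enters), contract a prime of $S^{-1}B$ to get $\p\in\spec B$ with $\p\cap A\subseteq\mathfrak q$, and use that primes of a von Neumann regular ring are maximal to upgrade the containment to equality. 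That last step is exactly the right place to locate the difficulty, and your observation that without zero-dimensionality the contraction could be a proper sub-prime (so that surjectivity can fail for general extensions) is accurate. What the paper's version buys is brevity, at the cost of leaning on Raphael's lemma as a black box; what yours buys is independence from that citation and the explicit remark that no regularity of $B$ is used and that $a=1$ recovers surjectivity of $\phi$ as a special case. Either argument is acceptable as a proof of the lemma.
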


\begin{proof}
Suppose $a\in A$.

"$\subset$"
Let $\p\in \spec B$ and suppose $a\not\in \p$, then clearly $a\not\in \p\cap A$. In other words $\phi(\p)\in D_A(a)$.

\vspace{5mm}
"$\supset$" Let $\mathfrak q\in \spec A$ and let $a\not\in\mathfrak q$, then by \cite{raphael} Lemma 1.14 there exists a $\p\in\spec B$
such that $\phi(\p)=\mathfrak q$. If $a\in\p$ then $a\in\p\cap A=\mathfrak q=\phi(\p)$ is a contradiction, thus $a\not\in \p$.
So there is a $\p\in D_B(a)$ such that $\phi(\p)=\mathfrak q$.
\end{proof}

\begin{theorem}\label{nearopen}
Let $A$ be a von Neumann regular ring, then the canonical map
$$\phi: \spec B(A) \longrightarrow \spec A$$
is a near open surjection. 
%\fbox{
%\begin{minipage}{12cm}
%\textbf{Additional Unchecked!}
Moreover $\phi$ is open iff $A=B(A)$ (i.e. $A$ is Baer).
%\end{minipage}
%}
\end{theorem}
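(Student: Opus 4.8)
The plan is to use that $\spec A$ is a Stone space (since $A$ is von Neumann regular), that $B(A)$ is an essential extension of $A$ (being a subextension of the dense extension $A\hookrightarrow Q(A)$), and that Lemma~\ref{vNr+overring} gives $\phi(D_{B(A)}(a))=D_A(a)$ for every $a\in A$. Surjectivity of $\phi$ is then immediate, e.g. from $\phi(\spec B(A))\supseteq\phi(D_{B(A)}(1))=D_A(1)=\spec A$.

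For near-openness, given a nonempty open $U\subseteq\spec B(A)$ I would choose $b\in B(A)$ with $\emptyset\neq D_{B(A)}(b)\subseteq U$. As $b$ is not nilpotent, $bB(A)$ is a nonzero ideal of $B(A)$, so by essentiality it contains some $a\in A$, $a\neq 0$; then $D_{B(A)}(a)\subseteq D_{B(A)}(b)\subseteq U$ (because $a\in bB(A)$) and $D_A(a)\neq\emptyset$ (because $A$ is reduced and $a\neq 0$). By Lemma~\ref{vNr+overring}, $D_A(a)=\phi(D_{B(A)}(a))\subseteq\phi(U)$, so $D_A(a)$ is a nonempty open subset of $\spec A$ sitting inside $\phi(U)$, which is exactly what near-openness asks for.

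For the second assertion the only real content is that $\phi$ open forces $A=B(A)$ (the converse is trivial, $\phi$ being the identity of $\spec A$ when $A=B(A)$). Since $B(A)$ is generated over $A$ by the idempotents of $Q(A)$, which are exactly the idempotents of $B(A)$ (as in the proof of Lemma~\ref{rcs_of_BHull}), it is enough to show that every idempotent $e\in B(A)$ lies in $A$. For such $e$, the clopen set $D_{B(A)}(e)$ is compact, so its image under the continuous map $\phi$ is a closed subset of the Hausdorff space $\spec A$; being open by hypothesis, this image is clopen, so $\phi(D_{B(A)}(e))=D_A(f)$ for a unique idempotent $f\in A$. From $D_{B(A)}(e)\subseteq\phi^{-1}(D_A(f))=D_{B(A)}(f)$ I get $ef=e$. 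For the reverse relation $ef=f$ I would argue by contradiction: if $f(1-e)\neq 0$, essentiality applied to the nonzero ideal $f(1-e)B(A)$ yields some $a=f(1-e)b\in A$ with $b\in B(A)$ and $a\neq 0$, and a short computation gives $ae=0$ and $af=a$. Choose $\mathfrak q\in\spec A$ with $a\notin\mathfrak q$. No $\p\in\spec B(A)$ with $e\notin\p$ can satisfy $\p\cap A=\mathfrak q$, since such a $\p$ would have $a\notin\p$ while $ae=0\in\p$ forces $a\in\p$ or $e\in\p$; hence $\mathfrak q\notin\phi(D_{B(A)}(e))=D_A(f)$, i.e. $f\in\mathfrak q$, and then $a=af\in\mathfrak q$, a contradiction. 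So $f(1-e)=0$, $ef=f$, and together with $ef=e$ this gives $e=f\in A$.

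I expect the genuine obstacle to be exactly this last step, the reverse idempotent relation $ef=f$: it is the only point where one must combine the essentiality of $A$ in $B(A)$ with the fact that the points of $\spec B(A)$ are prime ideals, whereas everything else is routine manipulation of the principal (cl)open sets together with Lemma~\ref{vNr+overright}.
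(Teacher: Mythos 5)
Your proof is correct, and it splits naturally into a part that matches the paper and a part that does not. For near-openness and surjectivity you argue exactly as the paper does: reduce to a basic open set $D_{B(A)}(b)$, use the essentiality (equivalently, the ring-of-quotients property) of $A\subseteq B(A)$ to produce a nonzero $a\in bB(A)\cap A$, and push down with Lemma~\ref{vNr+overring}; the only cosmetic difference is that the paper takes the multiplier in $A$ (some $y\in A$ with $by\in A\setminus\{0\}$) rather than an arbitrary element of $bB(A)\cap A$. Where you genuinely diverge is the equivalence ``$\phi$ open iff $A$ Baer''. The paper's argument (due to N.~Schwartz) is topological: if $A$ is not Baer then $\spec A$ is not extremally disconnected by Prop.~2.1 of \cite{Mewborn}, while $\spec B(A)$ is; choosing open $U\subseteq\spec A$ with non-open closure, the identity $\phi(\overline{\phi^{-1}(U)})=\overline U$ (continuous surjection from a compact space onto a Hausdorff space) shows that openness of $\phi$ would make $\overline U$ open, a contradiction. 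You instead show directly that openness of $\phi$ forces every idempotent $e\in B(A)$ into $A$: the image $\phi(D_{B(A)}(e))$ is clopen, hence equals $D_A(f)$ for a unique idempotent $f\in A$; the inclusion $D_{B(A)}(e)\subseteq D_{B(A)}(f)$ gives $ef=e$, and your essentiality argument with a nonzero $a=f(1-e)b\in A$ (so $ae=0$, $af=a$) correctly rules out $f(1-e)\neq 0$, giving $e=f\in A$; since $B(A)$ is generated over $A$ by the idempotents of $Q(A)$, which lie in $B(A)$ by construction of the Baer hull, this yields $A=B(A)$. Your route is more elementary and self-contained -- it avoids Mewborn's extremal-disconnectedness characterization (in effect reproving the direction of it needed here) and the closure identity for proper maps -- at the cost of a longer idempotent computation; the paper's route is shorter but imports those two external facts. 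Two minor points: your reduction only needs $E(Q(A))\subseteq B(A)$, not the identification $E(Q(A))=E(B(A))$, and the last reference in your write-up should read Lemma~\ref{vNr+overring}.
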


\begin{proof}
Suppose $U\subset \spec B(A)$ is a nonempty open set. Without loss of 
generality we may assume 
$$U=D_{B(A)}(x)$$ 
for some $x\in B(A)\wo{0}$. 

Now because $B(A)$ is a ring of quotients of $A$ (see for instance the last paragraph of \cite{roqrof} p.8) there exists a $y\in A$ such 
that $xy\in A\wo{0}$ (this is because $A$ is semiprime and commutative, see \cite{roqrof} Theorem following Lemma 1.5). We also then have
$$D_{B(A)}(x)\supset D_{B(A)}(xy)$$
Using the above equation and the preceeding Lemma we obtain
$$\phi(D_{B(A)}(x))\supset D_A(xy)$$
and therefore $\phi$ is near open. $\phi$ is a surjection because of \cite{raphael} Lemma 1.14.

\vspace{5mm}
\noindent Now we prove the last statement of the Theorem, the proof that follows is by Niels Schwartz.
%Jose

\vspace{5mm}
%\fbox{
%\begin{minipage}{13.5cm}
%\begin{center}\textbf{Additional Unchecked!}\end{center}
If $A$ is Baer then $A=B(A)$ and so $\phi$ is a homeomorphism, thus an open map. If $A$ is not Baer then $\spec A$ is not extremally disconnected 
(see Prop. 2.1 \cite{Mewborn}), suppose then that $\phi$ is open. Since $\spec A$ is not extremally disconnected, there exists 
an open set $U\subset \spec A$ such that $\overline U$ (i.e. the topological closure of $U$ in $\spec A$) is not open in $\spec A$. 
Because $\spec B(A)$ is extremally disconnected $\overline{\phi^{-1}(U)}$  (closure in $\spec B(A)$) is clopen, but because $\phi$ is 
a continuous surjection, $\spec B(A)$ is compact and $\spec A$ is Hausdorff we the following result from basic general topology
%see PhD Notes 26.10.2007
$$\phi(\overline{\phi^{-1}(U)})=\overline U\subset \spec A$$
And because we assumed $\phi$ is open, the above equation implies that $\overline U$ is open, which is a contradiction.
%\end{minipage}
%}
\\
\end{proof}

\begin{theorem}
Let $A$ be a real regular ring, then $A$ has no 
unique real closure $*$ iff there exists an $x\in A$ such that 
$$[\supp_A P(x)\cap\supp_A P(-x)]^\circ \neq \emptyset$$ 
\end{theorem}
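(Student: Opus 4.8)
The plan is to reduce the question to the Baer hull $B(A)$ via Lemma \ref{rcs_of_BHull} and to detect non-uniqueness from the sign behaviour of a single element of $A$, moving the relevant open sets back and forth along the canonical near open surjection $\phi:\spec B(A)\to\spec A$ of Theorem \ref{nearopen}. I will use two facts about a real closed $*$ regular ring $R$ (from the basic theory of \cite{Capco}): every prime quotient $R/\mathfrak{r}$ is a real closed field, and for $r\in R$ the loci $\{\mathfrak{r}:r\bmod\mathfrak{r}>0\}$ and $\{\mathfrak{r}:r\bmod\mathfrak{r}<0\}$ are clopen in $\spec R$, so the ``sign of $r$'' is a locally constant $\{-1,0,1\}$-valued function $\mathrm{sgn}_R(r)$ on $\spec R$. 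Two further preliminaries: since $B(A)=A[E(Q(A))]$ with $E(Q(A))\subseteq B(A)$, every element of $B(A)$ is, modulo any prime, the image of an element of $A$, so the corresponding residue fields of $A$ and $B(A)$ coincide; and since $A\hookrightarrow Q(A)$ is an epimorphism, any $A$-algebra isomorphism between two real closures $*$ of $A$ restricts to the identity on $B(A)$, whence (using Raphael's Lemma for $B(A)$ and its Baer regular essential extensions) an $A$-isomorphism between real closures $*$ $B$ and $C$ of $A$ induces an $A/(\p\cap A)$-algebra isomorphism $C/\p C\to B/\p B$ for every $\p\in\spec B(A)$, hence preserves the sign of every element of $A$ at every such prime.

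For ``$\Rightarrow$'' I assume $A$ has real closures $*$ $B,C$ that are not $A$-isomorphic. By Lemma \ref{rcs_of_BHull} I regard $B(A)$ as a subporing of each and identify $\spec B\cong\spec B(A)\cong\spec C$; by Lemma \ref{Baerprime} there is $\p_0\in\spec B(A)$ with $B/\p_0B\not\cong_{A/\mathfrak{q}_0}C/\p_0C$, where $\mathfrak{q}_0:=\p_0\cap A$. Both quotients are real closed fields algebraic over the field $A/\mathfrak{q}_0$, and being non-isomorphic as $A/\mathfrak{q}_0$-algebras they are real closures of $A/\mathfrak{q}_0$ with respect to distinct orderings, so there is $x\in A$ whose image in $A/\mathfrak{q}_0$ is positive in one of these orderings and negative in the other. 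Now $N:=\{\p\in\spec B(A):\mathrm{sgn}_B(x)(\p)\neq\mathrm{sgn}_C(x)(\p)\}$ is clopen (sign loci are clopen) and nonempty (it contains $\p_0$); at each $\p\in N$ neither sign is $0$ (that would force $x\in\p$ and both signs $0$), so $B(A)/\p$ carries orderings realising $x$ with both signs, and restricting these to $A/(\p\cap A)$ gives $\p\cap A\in\supp_AP(x)\cap\supp_AP(-x)$. Thus $N\subseteq\phi^{-1}\big(\supp_AP(x)\cap\supp_AP(-x)\big)$, and since $N$ is a nonempty open subset of $\spec B(A)$ and $\phi$ is near open, $\phi(N)$ contains a nonempty open $V\subseteq\supp_AP(x)\cap\supp_AP(-x)$; hence $\big[\supp_AP(x)\cap\supp_AP(-x)\big]^\circ\neq\emptyset$.

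For ``$\Leftarrow$'' I take $x\in A$ with $W:=\big[\supp_AP(x)\cap\supp_AP(-x)\big]^\circ\neq\emptyset$. Then $\phi^{-1}(W)$ is a nonempty open subset of the Stone space $\spec B(A)$, so it contains a nonempty clopen $D=D_{B(A)}(e)$; over $D$, every residue field $B(A)/\p$ ($=A/(\p\cap A)$) carries orderings of both signs of $x$ — in particular $x\notin\p$, so $x$ is a unit of $eB(A)$, and $x$ is a square in none of these residue fields. Fix any real closure $*$ $B$ of $B(A)$ (these exist, \cite{Capco}) and choose $\sigma\in\{-1,1\}$ so that $\mathrm{sgn}_B(x)$ is not identically $\sigma$ on $D$ (if it is constant on $D$, take $\sigma$ to be its negated value; otherwise either choice works). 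The $eB(A)$-algebra $S:=eB(A)[t]/(t^2-\sigma x)$ is regular (it is étale over $eB(A)$, since $2$ and $\sigma x$ are units) and is an essential extension of $eB(A)$ (it introduces no new idempotents, precisely because $\sigma x$ is a square at no prime of $D$); and $S$ is real, since each residue field $B(A)/\p(\sqrt{\sigma x})$ is dominated by the real closure of the ordering of $B(A)/\p$ in which $\sigma x>0$. Let $C'$ be a real closure $*$ of $S$; then $C'$ is also a real closure $*$ of $eB(A)$, and in $C'$ one has $\sigma x=t^2\ge 0$, so $\mathrm{sgn}_{C'}(x)\equiv\sigma$ on $D$. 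Setting $C:=C'\times(1-e)B$, a finite product of real closed $*$ rings, hence itself real closed $*$, one checks $C$ is an essential integral extension of $B(A)=eB(A)\times(1-e)B(A)$, so $C$ is a real closure $*$ of $B(A)$ and therefore of $A$. Finally $\mathrm{sgn}_C(x)|_D=\mathrm{sgn}_{C'}(x)|_D\neq\mathrm{sgn}_B(x)|_D$, so $x$ has different signs in $B/\p B$ and $C/\p C$ for some $\p\in D$, and by the first paragraph $C$ is not $A$-isomorphic to $B$; hence $A$ has no unique real closure $*$.

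The step I expect to be the main obstacle is the construction in ``$\Leftarrow$'': producing an honest second real closure $*$ forces one to verify that the quadratic extension $S=eB(A)[t]/(t^2-\sigma x)$ is again von Neumann regular and, above all, an essential extension of $eB(A)$ — that it adds no new idempotents — and this is exactly where the hypothesis that $x$ is sign-indefinite over a genuinely \emph{open} set of primes (not merely at isolated ones) gets used. The surrounding ingredients (existence of a real closure $*$ of a real regular ring, stability of ``real closed $*$'' under finite products and clopen localisations, and that a real closure $*$ is an integral essential extension) belong to the standard theory of \cite{Capco}, and the direction ``$\Rightarrow$'' is comparatively routine once Lemma \ref{Baerprime}, the clopenness of sign loci, and the near-openness of $\phi$ are in hand.
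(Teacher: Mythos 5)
Your forward direction is essentially the paper's own argument: reduce to $B(A)$ via Lemma \ref{rcs_of_BHull}, use Lemma \ref{Baerprime} to find a prime $\p_0$ of $B(A)$ where the two residue fields differ, extract an $x\in A$ with opposite signs, exhibit a nonempty open subset of $\spec B(A)$ whose image lies in $\supp_A P(x)\cap\supp_A P(-x)$, and finish with near-openness (Theorem \ref{nearopen}); phrasing this through sign loci rather than through the paper's sets $\phi_1^{-1}P(x)\cap\phi_2^{-1}P(-x)$ is only cosmetic. Where you genuinely diverge is the converse: the paper disposes of it by citing Theorem 14 of \cite{Capco2} and noting that the sufficiency half of that proof never used Baer-ness, whereas you give a self-contained construction (shrink $\phi^{-1}(W)$ to a clopen $D_{B(A)}(e)$, adjoin $\sqrt{\sigma x}$ to $eB(A)$, take a real closure $*$ of the result, and glue with a fixed closure on $(1-e)B(A)$). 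This construction is sound, and its crux is exactly where you predicted: $S=eB(A)[t]/(t^2-\sigma x)$ is regular and acquires no idempotents beyond those of $eB(A)$ because $\sigma x$ is a square in no residue field over $D$; you should add the one line that for regular rings \emph{no new idempotents} really does give essentiality (every nonzero principal ideal of $S$ contains a nonzero idempotent, which then lies in $eB(A)$). What your route buys is independence from \cite{Capco2} Theorem 14 and an explicit second closure; what the paper's route buys is brevity.

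One justification, however, is wrong as stated: for a regular ring $A$ the inclusion $A\hookrightarrow Q(A)$ is in general \emph{not} an epimorphism of rings. For instance, let $A$ be the ring of eventually constant sequences in $\prod_{\mathbb N}\mathbb Q$, so that $Q(A)=\prod_{\mathbb N}\mathbb Q$; two distinct nonprincipal ultrafilters yield two homomorphisms of $Q(A)$ into a common large real closed field which agree on $A$ (every eventually constant sequence goes to its eventual value) but differ on a suitable idempotent. Fortunately the statement you actually need — that an $A$-isomorphism $f:B\to C$ between real closures $*$ restricts to the identity on the canonical copies of $B(A)$ — is true and provable with the tools already in the paper: if $e\in Q(A)$ is an idempotent represented on a dense ideal $D_0\lessdot A$, then its copies $e_B\in B$, $e_C\in C$ satisfy $(f(e_B)-e_C)D_0=0$, and essentiality of $C$ over $A$ together with $\mathrm{Ann}_A(D_0)=0$ forces $f(e_B)=e_C$; since $B(A)$ is generated by $A$ and such idempotents, $f$ fixes $B(A)$, and then it carries $\p B$ to $\p C$, so your sign discrepancy at a prime of $D$ does rule out an $A$-isomorphism. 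With that repair (or by simply quoting Theorem 14 of \cite{Capco2} for sufficiency, as the paper does) your proof is complete.
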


\begin{proof}
"$\Leftarrow$" Theorem 14 of \cite{Capco2} states the 
same thing as this Proposition, however it was assumed there that $A$ is Baer and no mention of near openness is made. 
However in the sufficiency condition of the said Theorem there was no implementation of $A$ being Baer. Thus we need only prove the
necessity for this Proposition.

\vspace{5mm}
"$\Rightarrow$" We almost use the same method of proof as seen in Theorem 14 \cite{Capco2}. Let $C_1,C_2$ be two real closure $*$ of $A$
such that they are not $A$-isomorphic. 

Throughout the proof let $i=1,2$. By Lemma \ref{rcs_of_BHull} we may regard $B(A)$ as a subporing of $C_i$ and denote 
$$\nu_i : \spec C_i \stackrel{\sim}{\longrightarrow} \spec B(A)$$
to be the canonical spectral map (it is a homeomorphism by Raphael's Lemma). 

Now, by Lemma \ref{Baerprime}, there exists a $\p\in B(A)$ such that 
$$C_1/\p C_1 \not\cong_{A/\p\cap A} C_2/\p C_2$$
We observe that $C_i/\p C_i$ is a real closed field (as $C_i$ is real closed $*$, therefore has factor fields that are real closed. 
See \cite{Capco} Theorem 15), and is algebraic over the field $A/\p\cap A$. Thus there are $\alpha_1,\alpha_2\in\sper A$ such that
$$\supp(\alpha_1)=\supp(\alpha_2)=\p\cap A$$
and 
$$\rho(\alpha_i)\cong_{A/\p\cap A} C_i/\p C_i \qquad i=1,2$$

We also have the following commutative diagram of topological (spectral) spaces
$$
\begindc{\commdiag}[1]
\obj(0,60)[A0]{$\sper A$}
\obj(20,50)[A]{$\alpha_i$}
\obj(100,60)[B0]{$\spec B(A)$}
\obj(70,50)[B]{$\p$}
\obj(0,-20)[C0]{$\spec A$}
\obj(20,10)[C]{$\p\cap A$}
\obj(150,30){$i=1,2$}
\mor{B0}{A0}{$\phi_i$}[-1,0]
\mor{B}{A}{}[1,4]
\mor{B0}{C0}{$\psi$}
\mor{B}{C}{}[1,4]
\mor{A0}{C0}{$\supp_A$}[-1,0]
\mor{A}{C}{}[1,4]
\enddc
$$
where 
$$\phi_i := \mu_i\circ\supp_{C_i}^{-1}\circ\nu_i^{-1}$$
with
$$\mu_i : \sper C_i \rightarrow \sper A\qquad \alpha\mapsto \alpha\cap A$$
Note that $\displaystyle\supp_{C_i}$ and $\nu_i$ are homeomorphisms ($C_i$ is a real closed ring too, see \cite{Capco}), therefore $\phi_i$ 
is indeed well-defined.

Now let $x\in \alpha_1\backslash\alpha_2$ then 
$$\p\in\supp_A P(x)\cap \supp_A P(-x)$$
and 
$$\supp_A P(x) \supset \psi\phi_1^{-1}P(x)$$
$$\supp_A P(-x) \supset \psi\phi_2^{-1}P(-x)$$
so
$$\supp_A P(x)\cap\supp_A P(-x) \supset \psi\phi_1^{-1}P(x)\cap \psi\phi_2^{-1}P(-x)\supset \psi(\phi_1^{-1}P(x)\cap \phi_2^{-1}P(-x))$$
but 
$$\p\in \phi_1^{-1}P(x)\cap \phi_2^{-1}P(-x)$$ 
because
$$\phi_1(\p)=\alpha_1\in P(x)$$
and 
$$\phi_2(\p)=\alpha_2\in P(-x)$$
Therefore $\phi_1^{-1}P(x)\cap \phi_2^{-1}P(-x)$ is a nonempty open set in $\spec B(A)$,

Now by Theorem \ref{nearopen} $\psi$ is near open, therefore there exists a nonempty open set $U\subset \spec A$ such that 
$$U\subset\psi(\phi_1^{-1}P(x)\cap \phi_2^{-1}P(-x))$$
Hence 
$$[\supp_A P(x)\cap\supp_A P(-x)]^\circ \neq \emptyset$$
\end{proof}

\begin{defi}
Let $A$ be a poring. By a section of $\supp:\spec A\rightarrow \sper A$, we mean a map 
$$s:\spec A\rightarrow \spec A$$
such that $\supp\circ s =\id_{\spec A}$, where for any set $X$ by $\id_X$ we mean the identity map 
$$\id_X : X\longrightarrow X \qquad x\mapsto x$$
\end{defi}

\begin{theorem}\label{classify_Baer}
Let $A$ be a real Baer regular ring, then there is a one to one correspondence between the set of all real closure $*$ 
of $A$ identified up to $A$-isomorphisms and the set of continuous sections of $\supp_A$.
\end{theorem}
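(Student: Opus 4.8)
The plan is to construct the correspondence explicitly in both directions and check it is well-defined and inverse to itself. Let $A$ be a real Baer regular ring. Since $A$ is Baer, $A = B(A)$, so the subtleties of the Baer hull disappear. Given a real closure $*$ $C$ of $A$, recall (as in the proof of Lemma~\ref{Baerprime}) that $C$ is real closed $*$, hence Baer, and both $\supp_C : \spec C \to \sper C$ and the contraction $\mu_C : \sper C \to \sper A$, $\alpha \mapsto \alpha \cap A$ are homeomorphisms (using \cite{Capco} and the fact that $C$ is an integral essential extension of $A$, so $\spec C \to \spec A$ is a homeomorphism by Raphael's Lemma). The map I would assign to $C$ is
$$
s_C := \supp_A^{-1} \circ \mu_C \circ \supp_C^{-1} \circ \psi_C^{-1} : \spec A \longrightarrow \spec A,
$$
where $\psi_C : \spec C \xrightarrow{\sim} \spec A$ is the canonical spectral homeomorphism. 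One checks $\supp_A \circ s_C = \id_{\spec A}$ by chasing the commutative triangle relating $\supp$, the contraction, and the spectral maps, exactly the diagram appearing in the proof of the previous theorem; continuity of $s_C$ is immediate since it is a composite of homeomorphisms (each of $\supp_A$, $\supp_C$ being a homeomorphism here because $A$ and $C$ are real closed rings). So $s_C$ is a continuous section of $\supp_A$. Moreover if $C_1 \cong_A C_2$ then the induced spectral and real-spectral maps agree, so $s_{C_1} = s_{C_2}$; this shows the assignment descends to $A$-isomorphism classes.

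For the reverse direction, given a continuous section $s : \spec A \to \spec A$ of $\supp_A$, I would produce a real closure $*$ as follows. For each $\mathfrak q \in \spec A$ the point $s(\mathfrak q) \in \sper A$ has support $\mathfrak q$, hence determines a real closed field $\rho(s(\mathfrak q))$ algebraic over $A/\mathfrak q$ together with a chosen ordering. The candidate ring is a subring of $\prod_{\mathfrak q \in \spec A} \rho(s(\mathfrak q))$ — concretely, one takes the sections of the sheaf whose stalks are the $\rho(s(\mathfrak q))$, i.e. the ``continuous'' tuples, where continuity is with respect to $s$. Since $A$ is Baer regular, $\spec A$ is Stone (extremally disconnected), and this sheaf-of-real-closed-fields construction over a Boolean space yields a regular ring $C_s$ which contains $A$ (via $a \mapsto (a \bmod \mathfrak q)_{\mathfrak q}$), is an integral essential extension of $A$, is Baer, and has all residue fields real closed; by \cite{Capco} Theorem~15 this forces $C_s$ to be real closed $*$, and being an integral essential extension of $A$ it is a real closure $*$ of $A$. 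Here the continuity of $s$ is exactly what guarantees the resulting object is a ring of sections rather than a wild product, and what makes $\spec C_s \to \spec A$ a homeomorphism.

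Finally I would verify the two constructions are mutually inverse. Starting from $C$, forming $s_C$, and then forming $C_{s_C}$: the stalk of the reconstructed sheaf at $\mathfrak q$ is $\rho(s_C(\mathfrak q)) = \rho(\mu_C \supp_C^{-1} \psi_C^{-1}(\mathfrak q))$, which is canonically the residue field $C/(\psi_C^{-1}\mathfrak q)$ with its induced ordering — so the sheaf of $C$ and the sheaf of $C_{s_C}$ have canonically identified stalks compatibly with $A$, and since both are the full ring of continuous sections over the same base $\spec A$, they are $A$-isomorphic; this is where one invokes that a real closed ring is recovered from its sheaf of real closed residue fields over its (Boolean) spectrum. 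Starting from $s$ and forming $s_{C_s}$ returns $s$ because the chosen ordering at $\mathfrak q$ in $C_s$ was, by construction, $s(\mathfrak q)$.

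The main obstacle I anticipate is the reverse construction: making precise that a continuous section $s$ genuinely yields a \emph{regular, Baer, essential, integral} extension and not merely a commutative ring, and that the sheaf-theoretic reconstruction $C \mapsto \Gamma(\spec A, \text{sheaf}) \cong C$ is valid for real closed $*$ rings — i.e. that a real closed $*$ regular ring is determined by its spectrum together with the family of real closed residue fields and their orderings. This is essentially a Pierce-sheaf-type representation refined to the real-closed setting; all the topological inputs (Stone space, extremal disconnectedness, homeomorphy of the spectral maps) are already available from Raphael's Lemma and \cite{Capco}, so the work is in assembling them correctly rather than in proving anything genuinely new.
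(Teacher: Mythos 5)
Your forward direction (a real closure $*$ $C$ gives the section $\mu_C\circ\supp_C^{-1}\circ\psi_C^{-1}$) is exactly the paper's surjectivity step, modulo the slip that you should not precompose with ``$\supp_A^{-1}$'' --- $\supp_A$ is not invertible, and the section must land in $\sper A$, not $\spec A$. The genuine gap is the reverse direction and the injectivity it is supposed to carry. Specifying the stalks $\rho(s(\mathfrak q))$ does not determine a sheaf: ``the continuous tuples, where continuity is with respect to $s$'' is not a defined object, and none of the required properties (that the resulting ring is regular, Baer, an \emph{integral} and \emph{essential} extension of $A$, with spectrum homeomorphic to $\spec A$) is established; you flag this yourself, but it is precisely where the theorem lives. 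Note in particular that essentiality is delicate: the full product $\prod_{\mathfrak q}\rho(s(\mathfrak q))$ is \emph{not} an essential extension of $A$ in general, so one cannot simply take all ``sections'' or the integral closure inside the product. The paper's way around this is concrete: embed $A$ diagonally in $B:=\prod_{\mathfrak p}\rho(s(\mathfrak p))$ (real closed by \cite{Capco} Remark 1), use Zorn's lemma to choose an ideal $I\supseteq I_B(\phi^{-1}(s(\spec A)))$ such that $B/I$ is an essential extension of $A$, observe that $B/I$ is then regular, Baer (Raphael's Lemma) and real closed $*$ (\cite{Capco} Theorem 15), and take $\ic(A,B/I)$, which is a real closure $*$ by \cite{Capco2} Proposition 6.

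The second, related gap is your assertion that since $C$ and $C_{s_C}$ are ``both the full ring of continuous sections over the same base,'' they are $A$-isomorphic. Having $A/\mathfrak p$-isomorphic residue fields at every prime does not by itself produce a global $A$-isomorphism: the stalkwise isomorphisms must be glued coherently, and that is exactly the nontrivial uniqueness statement the paper imports as Theorem 8 of \cite{Capco2}, which it uses for well-definedness (independence of the ideal $I$), for injectivity, and again at the end of surjectivity. Your Pierce-sheaf reconstruction claim for real closed $*$ rings is essentially equivalent to that theorem, so as written the proposal assumes the key input rather than proving or citing it. With the reverse construction replaced by the product/quotient/integral-closure argument and the stalkwise-determines-global step replaced by an appeal to \cite{Capco2} Theorem 8, your outline becomes the paper's proof.
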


\begin{proof}
Set 
$$\mathcal S := \{s: \spec A \rightarrow \sper A \sep s \textrm{ is continuous and } \supp\circ s =\id_{\spec A} \}$$
and $\mathcal C := \mathcal D/\cong_A$  

\comment{
where
$$\mathcal D:= \{C \sep C \textrm{ is a real closure} * \textrm{ of } A\}$$
and for $C_1,C_2\in \mathcal D$  
$$C_1\sim C_2 \Leftrightarrow C_1\cong_A C_2$$
}

We now attempt to define a bijection $\Phi: \mathcal S \rightarrow \mathcal C$. Let $s\in\mathcal S$, since $s$ is a section 
of $\supp$ (i.e. $\supp\circ s=\id_{\spec A}$) we know then that $A$ can be considered as a subring
of 
$$B:=\prod_{\p\in \spec A} \rho(s(\p))$$
$B$ is a real closed ring (see Remark 1 \cite{Capco}), and therefore $\supp_B$ is a homeomorphism. We thus have the following commutative diagram of 
spectral spaces
$$
\begindc{\commdiag}[1]
\obj(0,50)[A]{$\sper A$}
\obj(0,0)[B]{$\spec A$}
\obj(100,50)[C]{$\sper B \cong \spec B$}
\mor{A}{B}{$\supp$}[-1,0]
\mor{C}{A}{$\phi$}[-1,0]
\mor(90,48)(10,0){$\psi$}
%\mor{C}{B}{$\psi$}
\enddc
$$
where $\phi$ and $\psi$ are canonical maps. 

Now for any set $Z\subset \spec B$, define , as is usual in algebraic geometry, 
$$I_B(Z):=\bigcap_{\p \in Z} \p$$
Define $X:=s(\spec A)$ and observe then that 
\begin{eqnarray*}
I_B(\phi^{-1}(X))\cap A & = & \bigcap_{\p\in\phi^{-1}(X)} \p\cap A = \bigcap_{\p\in\phi^{-1}(X)} \psi(\p)\\
& = & \bigcap_{\p\in\phi^{-1}(X)} \supp(\phi(\p)) = \bigcap_{\mathfrak q\in\supp\circ\phi(\phi^{-1}(X))} \mathfrak q \\
& = &  \bigcap_{\mathfrak q \in \supp(X)} \mathfrak q = \brac{0}
\end{eqnarray*}
the last row of the equation is because $\phi$ is surjective and that 
$\supp(X)=\supp(s(\spec A))=\spec A$. 

We may therefore, by Zorn's Lemma, choose an ideal $I\unlhd B$ such that $I_B(\phi^{-1}(X))\subset I$ and 
$$A\lhrarrow B\longrightarrow B/I$$
is an essential extension of $A$. Set $Y:=\spec B/I\cong \sper B/I$ (Because $B/I$ is real closed, see 
\cite{Capco} Remark 1), we then have 
the following commutative diagram 
$$
\begindc{\commdiag}[1]
\obj(0,50)[A]{$\sper A$}
\obj(100,50)[B]{$Y$}
\obj(0,0)[C]{$\spec A$}
\mor{B}{A}{$\pi$}[-1,0]
\mor{A}{C}{$\supp$}[-1,0]
\mor{B}{C}{$\gamma$}[-1,0]
\enddc
$$
where $\pi$ and $\gamma$ ($\gamma$ being a homeomorphism by Raphael's Lemma) are canonical maps. Define
$$s':\spec A\longrightarrow X \qquad s'(\p):=s(\p) \,\forall \p\in\spec A $$
We now claim $\dots$

\vspace{5mm}
\noindent\underline{Claim 1: $s'\circ\supp|X = \id_X$} \\
We know 
$$s'\circ\supp\circ s'=s'$$
and we know that $s'$ is bijective (as $s$ is a section and therefore injective). 
So we may compose the right side by $s'^{-1}$ and we get the desired identity!

\vspace{5mm}
\noindent\underline{Claim 2: $\pi(Y)=X$}\\
Since $I\supset I_B(\phi^{-1}(X))$ and since we know that $\phi^{-1}(X)$ is closed 
(this is because $\phi$ is continuous and $s$ is a continuous map between a compact space and a Hausdorff space, 
and so $X=s(\spec A)$ and $\phi^{-1}(X)$ are closed)
in $\spec B$, we then know that 
$$Y\cong V_B(I) \subset V_B(I_B(\phi^{-1}(X)))=\phi^{-1}(X)$$
This imples that 
$$\pi(Y)=\phi(V_B(I))\subset \phi(\phi^{-1}(X))\subset X$$
therefore $\pi(Y)\subset X$
and so by Claim 1 we get
$$s\circ \gamma=s\circ\supp\circ\pi=\pi$$

In other words we have the commutative diagram 
$$
\begindc{\commdiag}[1]
\obj(0,50)[A]{$\sper A$}
\obj(100,50)[B]{$Y$}
\obj(0,0)[C]{$\spec A$}
\mor{B}{A}{$\pi$}[-1,0]
\mor{C}{A}{$s$}
\mor{B}{C}{$\gamma$}
\enddc
$$
but
$s\circ\gamma(Y)=\pi(Y)=s(\spec A)=X$ (because $\gamma$ is a homeomorphism and thus a surjection).

\vspace{5mm}
Now define 
$$ \Phi(s) := \ic(A,B/I)/\cong_A$$
we need yet to show that $\Phi$ defined in this way for any $s\in \mathcal S$ is $\dots$

\vspace{5mm}
\noindent\underline{Claim 3: well-defined} \\
In other words we need to show that for $s\in \mathcal S$, $\Phi(s)$ is in $\mathcal C$ and is independent of the choice of $I$ 
(as constructed above). Let $B$ and $I\unlhd B$ be as constructed above. Because $B/I$ is a von Neumann regular ring that is essential over 
the Baer ring $A$, $B/I$ is Baer and real closed (by Raphael's Lemma and Remark 1 in \cite{Capco}). Therefore $B/I$ is a real  closed 
$*$ ring (by \cite{Capco} Theorem 15). 
And so by \cite{Capco2} Proposition 6 $\ic(A,B/I)\in \mathcal D$. This proves that $\Phi(s)\in \mathcal C$. 

Now suppose that $I_1,I_2$ are two ideals in $B$ such that 
$$I_1,I_2 \supset I_B(\phi^{-1}(X)$$ 
and such that $B/I_1, B/I_2$ are essential extensions of $A$. We show that 
$$\ic(A,B/I_1)\cong_A \ic(A,B/I_1)$$
Let $i=1,2$ and define $C_i:=\ic(A,B/I_i)$. We then have the following commutative diagram of porings
$$
\begindc{\commdiag}[1]
\obj(0,50)[A]{$A$}
\obj(100,50)[B]{$B/I_i$}
\obj(100,0)[C]{$C_i$}
\mor{A}{B}{}[1,3]
\mor{A}{C}{}[1,3]
\mor{C}{B}{}[1,3]
\enddc
$$
with all the maps being canonical injections (whose spectral maps on their prime spectra are all homeomorphic). Now suppose that 
$\p\in \spec A$ then there is a unique $\p_i\in \spec B/I_i$ such that $\p_i\cap A=\p$ (in fact $\p_i=\p B/I_i$ by Raphael's Lemma). Now
accroding to the commutative diagram in Claim 2, we have the following commutative diagram of spectral spaces
$$
\begindc{\commdiag}[1]
\obj(0,50)[A]{$\sper A$}
\obj(0,0)[B]{$\spec A$}
\obj(120,50)[C]{$\sper B/I_i \cong \spec B/I_i$}
\obj(200,25){$i=1,2$}
\mor{C}{A}{$\pi_i$}[-1,0]
\mor{B}{A}{$s$}
\mor(110,48)(10,0){$\gamma_i$}
%\mor{C}{B}{$\psi$}
\enddc
$$
where $\pi_i,\gamma_i$ are canonical maps. Therefore $s\gamma_i(\p_i)=\pi(\p_i)=s(\p)$ and so because $C_i/\p C_i$ and 
$B/\p_i$ are real closed fields we obtain
$$C_i/\p C_i =\ic(A/\p, B/\p_i)\cong_{A/\p} \rho(s(\p))$$
and this is valid for all $\p\in \spec A$. Thus by Theorem 8 of \cite{Capco2}
$$C_1\cong_A C_2$$

\vspace{5mm}
\noindent\underline{Claim 4: injective} \\
Let $s,t\in\mathcal S$. Suppose also that $\Phi(s)=\Phi(t)$. Let $C\in \mathcal D$ such that 
$$\Phi(s)=\Phi(t)=C/\cong_A$$
as we have seen in Claim 3, we know that for all $\p\in \spec A$  one has
$$\rho(s(\p)) \cong_{A/\p} C/\p C \cong_{A/\p} \rho(t(\p))$$
thus one concludes at once that for all $\p\in\spec A$ one has $s(\p)=t(\p)$ and therefore $s=t$

\vspace{5mm}
\noindent\underline{Claim 4: surjective} \\
Let $C\in\mathcal D$, one then has the following commutative diagram of spectral spaces
$$
\begindc{\commdiag}[1]
\obj(0,50)[A]{$\sper A$}
\obj(0,0)[B]{$\spec A$}
\obj(100,50)[C]{$\sper C \cong \spec C$}
\mor{A}{B}{$\supp$}[-1,0]
\mor{C}{A}{$\pi$}[-1,0]
\mor(90,48)(10,0){$\gamma$}
%\mor{C}{B}{$\psi$}
\enddc
$$
where $\gamma$ and $\pi$ are canonical maps. So here, for any $\mathfrak q\in \spec C$ (because $C$ is real closed) we have the identity
$$\rho(\pi(\mathfrak q)) \cong_{A/\mathfrak q\cap A} C/\mathfrak q$$

\comment{
Now define
$$X:=\{\pi(\p C) \sep \p\in\spec A\} = \{\pi(\mathfrak q) \sep \mathfrak q\in\spec C\}$$
(as $\gamma$ is is a homeomorphism by Raphael's Lemma). 
}

Now define 
$$s:\spec A \rightarrow \sper A \qquad s(\p):=\pi(\gamma^{-1}(\p)) \,\forall \p\in\spec A$$
We show first that $s\in\mathcal S$. For all $\p\in\spec A$ we get
$$\supp\circ s(\p)=\supp\pi(\gamma^{-1}(\p))=\gamma\gamma^{-1}(\p)=\p$$
Thus $\supp\circ s = \id_{\spec A}$ (i.e. $s$ is indeed a section of $\supp$). Because both $\pi$ and $\gamma^{-1}$ are continuous maps
we see then that $s$ is a continous map. 
\comment{
Also by the definition of $X$ we have
$$s(\spec A)=\pi\gamma ^{-1}(\spec A) = \pi(\spec C)= X$$
and so $s$ is a surjective continous section. 
}

We now show that $\Phi(s)=C/\cong_A$. Let $C'\in\mathcal D$ such that $C'/\cong_A = \Phi(s)$. But from Claim 3 we have seen that for 
any $\p \in \spec A$ we have 
$$C'/\p C' \cong_{A/\p} \rho(s(\p))=\rho(\pi\gamma^{-1}(\p))\cong_{A/\p} C/\p C$$
One then uses Theorem 8 of \cite{Capco2} to claim that $C\cong_A C'$.
\end{proof}

\begin{prop} Let $A$ be a real von Neumann regular ring, then a section of $\supp_A$ is a homeomorphism onto its image iff it is 
continuous.
\end{prop}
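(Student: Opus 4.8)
The plan is to establish the nontrivial implication, namely that continuity of a section $s$ of $\supp_A$ already forces it to be a homeomorphism onto its image. The reverse implication needs no argument: if $s$ is a homeomorphism onto $s(\spec A)$, then composing with the inclusion $s(\spec A)\hookrightarrow\sper A$ shows that $s$ is continuous.

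So suppose $s\colon\spec A\to\sper A$ is a continuous map with $\supp_A\circ s=\id_{\spec A}$. First I would observe that $s$ is injective, since it has $\supp_A$ as a left inverse; set $X:=s(\spec A)$, equipped with the subspace topology from $\sper A$, so that $s$ becomes a continuous bijection onto $X$. The crux of the proof is the remark that the inverse of this bijection is nothing other than the restriction $\supp_A|_X$: indeed $\supp_A\bigl(s(\p)\bigr)=\p$ for every $\p\in\spec A$, so $\supp_A|_X\circ s=\id_{\spec A}$, and by injectivity of $s$ also $s\circ(\supp_A|_X)=\id_X$; hence $s^{-1}=\supp_A|_X$.

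It remains only to invoke continuity of the support map $\supp_A\colon\sper A\to\spec A$, which is standard: for $a\in A$ one has $\supp_A^{-1}(D_A(a))=P_A(a^2)$, an open set, and the $D_A(a)$ form a basis of $\spec A$. Then $s^{-1}=\supp_A|_X$ is continuous, so $s$ is a homeomorphism onto $X$. I expect no genuine obstacle here; the statement is a soft consequence of $\supp_A$ being a continuous map admitting $s$ as a continuous section, and in particular it requires neither compactness of $\spec A$ nor any separation property of $\sper A$. (If one prefers, the same conclusion also follows from the fact that for $A$ real von Neumann regular $\sper A$ is Hausdorff and $\spec A$ is compact, so a continuous injection is automatically a closed map; but the argument above is cleaner and avoids that input.) The one point that must be handled carefully is precisely the identification $s^{-1}=\supp_A|_X$ — everything else is formal.
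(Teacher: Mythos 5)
Your proof is correct, but it takes a genuinely different route from the paper's. You identify the inverse of the bijection $s\colon\spec A\to X:=s(\spec A)$ explicitly as the restriction $\supp_A|_X$, and then you only need the continuity of the support map (verified via $\supp_A^{-1}(D_A(a))=P_A(a^2)$); in other words you invoke the general fact that a continuous map admitting a continuous left inverse is an embedding. The paper argues instead through compactness and separation: since $\spec A$ is compact and $\sper A$ is Hausdorff (this is where the hypothesis that $A$ is real von Neumann regular actually enters), a continuous section is a continuous injection from a compact space into a Hausdorff space, hence closed onto its image, hence a homeomorphism onto its image. Your argument is more elementary and more general --- it uses neither compactness of $\spec A$ nor any separation property of $\sper A$, so it proves the statement for an arbitrary poring --- whereas the paper's argument is the standard compact-to-Hausdorff mechanism that is natural here because both spectra are Stone spaces, and it is the same device the paper reuses elsewhere (e.g.\ in the proofs of Theorem \ref{classify_Baer} and Theorem \ref{sperB}). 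You even record this alternative in your parenthetical remark; the treatment of the trivial direction is the same in both proofs.
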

\begin{proof}
The proof is quite straightforward. One side of the equivalence is trivial. Because $\spec A$ and $\sper A$ are compact and Hausdorff
then the image of a continuous section of $\supp$ is closed compact and Hausdorff in $\sper A$. The same reasoning tells us that
the section brings closed sets to closed set in the image (because it is a continuous map from a compact space to a Hausdorff space). The 
section being injective is thus a homeomorphism onto its image.
\end{proof}

%% ADD definition of closed networks and set open topologies in Phdrcr3
Let $A$ be a real Baer regular ring, and set $X:=\spec A$ and $Y:=\sper A$. 
In \cite{MN} Chapter I there is a beautiful treatise on the different topologies that the set of continuous functions from $X$ to $Y$ may have
(and by which practical application may be applied on these topologies). We shall
denote the set of continuous functions from $X$ and $Y$ as $C(X,Y)$ for now. $C(X,Y)$ may have the so called \emph{point convergence 
topology} which is simply
the topology relative to the Tychonoff product topology of $Y^X$. A finer topology would be the 
\emph{compact-open topology} (see \cite{MN} p.4). 
As is shown in Theorem 1.1.3 of \cite{MN} most reasonable topologies of $C(X,Y)$ contain the point convergence topology.
So if we show that a subset of $C(X,Y)$ is closed with respect to the point convergence topology
then it is automatically closed in these other topologies of $C(X,Y)$ (namely those induced by \emph{closed networks} on $X$,
for terminologies and further reading the reader is advised to consult \cite{MN} Chapter I).

Below is a Lemma that is proven by K.P. Hart (with a bit of rewording by me) in the sci.math newsgroup during one of our discussion 
regarding the set of continuous sections of a continuous map.

\begin{lemma}(K.P. Hart, 12.2007) Given a surjective continuous function between T1 topological spaces, say $\pi :Y \rightarrow X$, the set of continuous sections
of $\pi$ is closed in $C(X,Y)$ (i.e. set of continuous functions from $X$ to $Y$) with the point convergence topology.
\end{lemma}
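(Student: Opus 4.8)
The plan is to work inside the big product space $Y^X$ equipped with the Tychonoff product topology, of which $C(X,Y)$ with the point convergence topology is, by definition, a topological subspace. Rather than argue directly inside $C(X,Y)$ — where $C(X,Y)$ itself need not be closed in $Y^X$, since pointwise limits of continuous maps need not be continuous — I would exhibit a \emph{closed} subset $\Sigma$ of $Y^X$ whose intersection with $C(X,Y)$ is exactly the set of continuous sections of $\pi$; closedness of that set in the subspace $C(X,Y)$ then follows at once.

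Concretely, set $\Sigma := \{ f \in Y^X : \pi\circ f = \id_X \}$, the set of all (not necessarily continuous) set-theoretic sections of $\pi$. For each $x\in X$ let $\mathrm{ev}_x : Y^X \to Y$, $f\mapsto f(x)$, be the evaluation/projection map, which is continuous by the definition of the product topology. Then
$$\Sigma = \bigcap_{x\in X}\mathrm{ev}_x^{-1}\bigl(\pi^{-1}(\{x\})\bigr).$$
Since $X$ is T1 the singleton $\{x\}$ is closed in $X$, and since $\pi$ is continuous $\pi^{-1}(\{x\})$ is closed in $Y$; hence each $\mathrm{ev}_x^{-1}(\pi^{-1}(\{x\}))$ is closed in $Y^X$, and $\Sigma$, an intersection of closed sets, is closed in $Y^X$. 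Finally, a map $X\to Y$ is a continuous section of $\pi$ iff it lies in $C(X,Y)$ and satisfies $\pi\circ f=\id_X$, i.e. iff it lies in $\Sigma\cap C(X,Y)$; as the point convergence topology on $C(X,Y)$ is the subspace topology inherited from $Y^X$, this set is closed in $C(X,Y)$, which is the assertion.

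There is essentially no computational obstacle here; the only point worth flagging is the strategy itself — not to fight the non-closedness of $C(X,Y)$ in $Y^X$, but to cut out the sections by a closed condition already at the level of $Y^X$ and restrict afterwards. One should also note that the argument uses only the T1-ness of $X$ (to make $\{x\}$ closed) together with continuity of $\pi$; surjectivity of $\pi$ and T1-ness of $Y$ are not actually needed for this statement (if $\pi$ fails to be surjective the set of sections is simply empty, which is closed). If one prefers a net-theoretic formulation, the same conclusion follows by observing that for a net $(s_\alpha)$ of continuous sections converging pointwise to a continuous $s$, continuity of $\pi$ gives $x=\pi(s_\alpha(x))\to\pi(s(x))$ for each $x$, and in a T1 space the constant net at $x$ has $x$ as its unique limit, whence $\pi(s(x))=x$.
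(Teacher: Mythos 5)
Your proposal is correct and follows essentially the same route as the paper: you cut out the set of set-theoretic sections as the intersection $\bigcap_{x\in X}\{f\in Y^X : f(x)\in\pi^{-1}(x)\}$ of closed subsets of $Y^X$ (closed since $\{x\}$ is closed by T1-ness of $X$ and $\pi$ is continuous) and then intersect with $C(X,Y)$, which carries the subspace topology. Your additional remarks — that the paper's ``obviously closed'' step rests on continuity of the evaluation maps, and that surjectivity of $\pi$ and T1-ness of $Y$ are not actually used — are accurate but do not change the argument.
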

\begin{proof}
Let 
$$F_x :=\{f \in Y^X  \sep f(x)\in\pi^{-1}(x)\}$$
then this set is obviously closed (with the point convergence topology) in $Y^X$ and the set of continuous sections 
of $\pi$ can be written as the intersection
$$\bigcap_{x\in X} F_x \cap C(X,Y)$$
and this is also obviously closed relative to $C(X,Y)$.
\end{proof}

\begin{cor}
Let $A$ be a real Baer von Neumann regular ring, then the set of real closure $*$ of $A$ identified upto $A$-isomorphism form a 
Hausdorff topological space and can be identified as a closed subspace of $C(X,Y)$ with the point convergence topology (and thus also 
in other finer topologies induced by \emph{closed networks} on $X$ as defined in \cite{MN} p.3, this fact is due to
Theorem 1.1.3 of \cite{MN})
\end{cor}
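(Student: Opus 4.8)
The plan is to obtain the corollary by combining Theorem \ref{classify_Baer} with K.P. Hart's Lemma, the remaining input being standard topological facts about $X = \spec A$ and $Y = \sper A$. First I would use the bijection of Theorem \ref{classify_Baer} to \emph{define} the topology: the set of real closures $*$ of $A$ identified up to $A$-isomorphism is given the unique topology making this bijection a homeomorphism onto $\mathcal{S}$, the set of continuous sections of $\supp_A$, where $\mathcal{S}$ carries the subspace topology from $C(X,Y)$ with the point convergence topology (equivalently, the subspace topology from the Tychonoff product $Y^X$).

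It then suffices to prove two things. The first is that $\mathcal{S}$ is closed in $C(X,Y)$, which is precisely K.P. Hart's Lemma applied to $\supp_A : Y \to X$; its hypotheses hold because $\supp_A$ is continuous, surjective (as $A$ is real, every residue field $A/\p$ is formally real, hence orderable), and $X$, $Y$ are $T_1$ --- indeed both $\spec A$ and $\sper A$ are compact Hausdorff for a real von Neumann regular ring, as already invoked in the proof of the Proposition immediately preceding this corollary. The second is that the resulting space is Hausdorff: since $Y = \sper A$ is Hausdorff, the product $Y^X$ is Hausdorff, and hence so are the subspace $C(X,Y)$ and its closed subspace $\mathcal{S}$.

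For the parenthetical assertion about finer topologies I would appeal to Theorem 1.1.3 of \cite{MN}: every topology on $C(X,Y)$ induced by a closed network on $X$ is finer than the point convergence topology, so a set closed in the latter is a fortiori closed in each such topology; applying this to $\mathcal{S}$ completes the argument.

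I do not expect a genuine obstacle here --- this corollary essentially packages Theorem \ref{classify_Baer} and Hart's Lemma. The two places deserving a little care are being explicit that the bijection of Theorem \ref{classify_Baer} is being used to transport a topology (not merely to count), and invoking correctly that $\sper A$ and $\spec A$ are compact Hausdorff, hence $T_1$, for real von Neumann regular $A$ --- this single fact feeds both Hart's Lemma and the Hausdorffness of the product.
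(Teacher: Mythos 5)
Your proposal is correct and follows the same route as the paper: identify the set of real closures $*$ up to $A$-isomorphism with the continuous sections of $\supp_A$ via Theorem \ref{classify_Baer}, then apply K.P. Hart's Lemma with $\pi=\supp_A$, $X=\spec A$, $Y=\sper A$. The paper's own proof is just a terser version of this; your extra verifications (surjectivity and continuity of $\supp_A$, Hausdorffness inherited from $Y^X$, and the appeal to Theorem 1.1.3 of \cite{MN} for finer topologies) are exactly the details it leaves implicit.
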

\begin{proof}
Because of Theorem \ref{classify_Baer}, we may identify the set of real closure $*$ of $A$ with the set of continuous sections of 
$\supp_A$. Set $X:=\spec A$ and $Y:=\sper A$ and write $C(X,Y)$ to be the set of continuous functions from $X$ to $Y$ and use 
the above Lemma substituting $\pi$ with $\supp_A$.
\end{proof}

\comment{
Note that the space defined in the above Proposition is, in most cases, not compact in the point convergence topology. 
Let $X$ and $Y$ be as in the above proposition. $C(X,Y)$ with the point convergence topology 
is dense in $Y^X$ and the two sets
are hardly the same, so we know that $C(X,Y)$ is not compact (since $Y^X$ is Hausdorff and so $C(X,Y)$ need to at least be closed to be compact!).
And any compact subset of $C(X,Y)$ is also compact in $Y^X$ with the product topology. It therefore must be closed in $Y^X$. 
Now suppose the space defined in the above proposition is compact, 
by the proof of the Proposition above $\bigcap F_x$, for cases by which $\supp_A$ has a non-continuous section, have elements outside 
of $C(X,Y)$ which is a closure point of $C(X,Y)$. Thus $\bigcap F_x \cap C(X,Y)$ has a closure point that it doesn't contain, a contradaction.
%%% There is a problem in the closure point above because $\overline{\bigcap F_x} \cap \overline{C(X,Y)} \subset
%%% \overline{F_x \cap C(X,Y)}$
Thus in general if $\supp_A$ has a section that is non-continuous, we have for any topology finer than the 
point convergence topology of $C(X,Y)$, the set of continuous sections of $\supp_A$ is not compact in $C(X,Y)$. In particular
the set of continuous sections of $\supp_A$ is not compact in the compact open topology if $\supp_A$ contains sections that are 
non-continuous.

\vspace{5mm}
}

During the investigation of von Neumann regular rings, I made many use of the Baer hull of the ring. It was therefore natural to ask the
question whether the Baer hull and the complete ring of quotients of such rings coincide. The example below shows that one may indeed 
have a Baer von Neumann regular ring that is not rationally complete.

\begin{exam}
Let $K$ be a real field (say $\R$). Also define a ring 
$$ R:=\prod_{x\in K} K_x \qquad K_x:=K \,\,\forall x \in K$$
with canonical (componentwise) addition and multiplication. We may also from now on regard $K$ as a subring $R$ by taking the
canonical monomorphism 
$$K\lhrarrow R \qquad k \mapsto \{k_x | x\in K, k_x=k \}$$

We now define a subring of $R$ 
$$A:=\{\sum_{i=1}^n e_ix_i \sep e_i\in E(R), n\in\N\} $$
We shall now give some facts regarding $A$ with sketches of their proof

\vspace{5mm}
\noindent\underline{Claim 1} For any $a\in A$ we claim that we may write $a$ as 
$$a=\sum_{i=1}^n e_ix_i$$
with $x_i\in K$ and $e_i\in E(R)$ for $i=1,\dots,n$ and the $e_i$'s satisfy the fact that they have pairwise disjoint supports
. In other words 
$$\{x\in K \sep e_i(x)\neq 0\}\cap \{x\in K \sep e_i(x)\neq 0\}=\emptyset \qquad i,j=1,\dots,n\quad i\neq j$$

To show this, we first write $a$ as $\sum_{i=1}^m f_iy_i$ for some $y_i\in K$ and $f_i\in E(R)$ (by definition of $A$). Now
we define $\mathcal S$ to be the powerset of $\{1,\dots,m\}$ without the emptyset and for any $S\in\mathcal S$ set
$$e_S :=\prod_{j\in S} f_j \prod_{k\not\in S} (1-f_k) $$
and
$$X_S :=\{x\in X \sep f_S(x) \neq 0\}$$
Then one shows that for any $S,T\in \mathcal S$ such that $S\neq T$ we get $S\neq T$ and we have the identity
$$a=\sum_{i=1}^m f_i y_i =\sum_{S\in\mathcal S} e_S\sum_{j\in S} y_j$$
Thus we may write $a$ as a linear combination (with $K$ as the scalar) of $2^n-1$ idempotents with disjoint support.

\vspace{5mm}\noindent \underline{Claim 2} One checks that $A$ is a proper subring of $R$. To check that $A$ is strictly
contained in $R$, one need to only show that the element $r\in R$ defined by $r(x)=x$ is not in $A$. To do this we note
a fact that $r$ can never be written as linear combination of idempotents of $R$ with disjoint supports, and then we make 
use of Claim 1. 

\vspace{5mm}\noindent\underline{Claim 3} We now claim that $A$ is in fact von Neumann regular. Let $a\in A\wo{0}$, then we may 
write $a$ as 
$$a=\sum_{i=1}^n e_i x_i \qquad x_i\in K\wo{0}, e_i\in E(R)\wo{0}$$
with $e_i$'s having pairwise disjoint supports. Then define $a'\in A$ by $a'=\sum e_ix_i^{-1}$, one easily sees that
$a'$ is the quasi-inverse of $a$, i.e. $a^2a'=a$. Because $a$ was an arbitrary nonzero element of $A$, we have proven that
any element of $A$ has a quasi-inverse and so the ring is von Neumann regular.

\vspace{5mm}\noindent\underline{Claim 4} $R$ is a rational extension of $A$ and $A$ is a Baer proper subring of $R$. $R$ is 
obviously a rationally complete ring (its the product of fields). And if $r\in R\wo 0$ then one can easily multiply it by 
an idempotent with finite support to have an element in $A$. So $R$ is a rational extension of $A$ which is rationally complete
and thus the complete ring of quotients of $A$ is $R$. $A$ also has all the idempotents of $R$, thus $A$ is Baer by Mewborn's
Proposition (see \cite{Capco} Proposition 2).
\begin{flushright}$\blacksquare$\end{flushright}
\end{exam}

%%%% change for Phdrcr3 %%%%%%%%%%%%%%
\begin{nota}
Let $A$ be a poring and $\alpha\in \sper A$, then we write $A(\alpha)$ to mean the real field $\qf{A/\supp(\alpha)}$ with
the canonical partial ordering corresponding to $\alpha$ (i.e. $\alpha/\supp(\alpha)\subset A(\alpha)^+$). The real closed field
(upto $A/\supp(\alpha)$-isomorphism) which is a real field extension of $A(\alpha)$ will then be denoted as $\rho(A(\alpha))$. We 
formerly used $\rho(\alpha)$ to denote this, but there is a good reason why we use $\rho(A(\alpha))$ instead. Firstly $\rho$ was the symbol
first used to mean the real closure (in the sense of Niels Schwartz) functor, and $\rho(A(\alpha))$ is indeed the real closure of 
$A(\alpha)$. Therefore we reduce confusion here (since $\rho(\alpha)$ used previously had nothing to do with the real closure functor 
$\rho$). Secondly, sometimes it is important for us to specify the ring involved and $\rho(A(\alpha))$ does show us that we are dealing 
with the poring $A$. So, we shall henceforth make use of this notation.
\end{nota}

\begin{theorem}\label{sperB}
Let $A$ be a real von Neumann regular ring and consider the pullback
$$
\begindc{\commdiag}[1]
\obj(0,50)[A]{$\sper A \times_{\spec A} \spec B(A)$}
\obj(100,50)[B]{$\sper A$}
\obj(0,0)[C]{$\spec B(A)$}
\obj(100,0)[D]{$\spec A$}
\mor{A}{B}{}
\mor{B}{D}{$\supp_A$}
\mor{A}{C}{}
\mor{C}{D}{$\phi_p$}[-1,0]
\enddc
$$
with $\phi_p$ being the canonical map (i.e. $\phi_p(\tilde\p):=\tilde\p\cap A$ for all $\tilde\p\in\spec B(A)$). 
It turns out then that the fiber product 
$$\sper A \times_{\spec A} \spec B(A)$$
is (canonically) homeomorphic to $\sper B(A)$.
\end{theorem}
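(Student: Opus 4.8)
The plan is to write down the evident candidate map from $\sper B(A)$ into the fibre product and to prove it is a homeomorphism; the only genuine content is that passing from $A$ to its Baer hull does not alter residue fields. Define
$$\Psi:\sper B(A)\longrightarrow \sper A\times_{\spec A}\spec B(A),\qquad \beta\longmapsto\bigl(\beta\cap A,\ \supp_{B(A)}(\beta)\bigr).$$
This lands in the fibre product since $\supp_A(\beta\cap A)=\supp_{B(A)}(\beta)\cap A=\phi_p(\supp_{B(A)}(\beta))$. It is continuous: the assignment $\beta\mapsto\beta\cap A$ is the map on real spectra induced by the poring inclusion $A\lhrarrow B(A)$ (an inclusion of porings, since $A^+\subset B(A)^+$ by the definition of $B(A)^+$), hence continuous, and $\supp_{B(A)}$ is continuous; as the fibre product carries the subspace topology of $\sper A\times\spec B(A)$, $\Psi$ is continuous.

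The heart of the matter is the claim that for each $\tilde\p\in\spec B(A)$, writing $\p:=\phi_p(\tilde\p)=\tilde\p\cap A$, the inclusion $A\hookrightarrow B(A)$ induces an isomorphism of fields $A/\p\cong B(A)/\tilde\p$ (both are fields because $A$ and $B(A)$ are von Neumann regular, so every prime is maximal). Injectivity is clear. For surjectivity recall, as in the proof of Lemma~\ref{rcs_of_BHull}, that $B(A)$ is generated as a ring by $A$ together with the idempotents of $B(A)$ (which coincide with the idempotents of $Q(A)$); modulo the prime $\tilde\p$ each such idempotent becomes $0$ or $1$, both in the image of $A$, so every element of $B(A)$ is congruent modulo $\tilde\p$ to an element of $A$. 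Granting this, an element of the fibre product is a pair $(\alpha,\tilde\p)$ in which $\alpha\in\sper A$ has $\supp_A(\alpha)=\p:=\tilde\p\cap A$, i.e.\ $\alpha$ determines an ordering of the field $A/\p$; transporting that ordering across the isomorphism and pulling it back along $B(A)\twoheadrightarrow B(A)/\tilde\p$ yields a prime cone $\beta$ of $B(A)$ with $\supp_{B(A)}(\beta)=\tilde\p$ and $\beta\cap A=\alpha$, and $\beta\supset A^+$ forces $\beta\supset B(A)^+$ (the weakest ordering of $B(A)$ extending $A^+$), so $\beta\in\sper B(A)$ and $\Psi(\beta)=(\alpha,\tilde\p)$; this $\beta$ is unique, since a field isomorphism carries orderings bijectively to orderings. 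Hence $\Psi$ is a bijection.

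It remains to promote the continuous bijection $\Psi$ to a homeomorphism, which I would do by compactness. The real spectrum $\sper B(A)$ is quasi-compact (being a spectral space). The target is Hausdorff: $\spec B(A)$ is a Boolean space (as $B(A)$ is von Neumann regular), and $\sper A$ is Hausdorff, because two distinct prime cones of $A$ with different supports are separated by $\supp_A^{-1}(U)$ and its complement for a suitable clopen $U\subset\spec A$ (a Boolean space), while two with a common support $\p$ are distinct orderings of the field $A/\p$ and hence separated by $P_A(a)$ and $P_A(-a)$ for suitable $a\in A\wo{0}$; therefore $\sper A\times\spec B(A)$, and its subspace $\sper A\times_{\spec A}\spec B(A)$, are Hausdorff. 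A continuous bijection from a quasi-compact space onto a Hausdorff space is a homeomorphism, so $\Psi$ is the claimed homeomorphism. The single step requiring real work is the residue-field isomorphism of the middle paragraph --- this is precisely where the fact that $B(A)$ is obtained from $A$ only by adjoining idempotents (creating no new residue fields) is used; everything else is formal.
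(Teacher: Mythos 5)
Your proof is correct, and its skeleton (write down the canonical map $\beta\mapsto(\beta\cap A,\supp_{B(A)}(\beta))$, show it is a bijection fibrewise over $\spec B(A)$, then finish with ``continuous bijection from a quasi-compact space to a Hausdorff space is a homeomorphism'') matches the paper's, but the central ingredient is genuinely different. The paper obtains the map from the universal property of the pullback and then handles bijectivity by viewing prime cones as pairs $(\rho(\,\cdot\,),\supp(\,\cdot\,))$ in the style of Knebusch--Scheiderer/Andradas--Br\"ocker--Ruiz, using that $B(A)$ is an \emph{integral} extension of $A$ so that $\rho(A(\alpha))\cong_{A/\supp(\alpha)}\rho(B(\tilde\alpha))$; for surjectivity it defines $\tilde\alpha=\{b\in B(A):b\bmod\tilde\p\in\rho(A(\alpha))^+\}$, which tacitly needs an embedding of $B(A)/\tilde\p$ into $\rho(A(\alpha))$ over $A/\p$. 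You instead prove the sharper, Baer-hull-specific fact that $A/\p\to B(A)/\tilde\p$ is an isomorphism (idempotents reduce to $0$ or $1$ modulo a prime, and $B(A)$ is generated over $A$ by idempotents of $Q(A)$), and then transport orderings across this isomorphism, checking $\beta\supset B(A)^+$ via the weakest-extension description of $B(A)^+$. Your route is more elementary and self-contained, and it makes the surjectivity step (and the uniqueness giving injectivity) completely transparent, precisely supplying the residue-field compatibility the paper's cited machinery is invoked for; the paper's route is the one that would generalize to integral extensions where residue fields genuinely grow (with the appropriate extendability hypotheses), at the cost of leaning on external results. Your Hausdorffness argument for $\sper A$ (separation by clopen preimages under $\supp_A$ when supports differ, by $P_A(a)$ and $P_A(-a)$ when they agree) is a correct substitute for the paper's unelaborated remark that the fibre product is a Stone space.
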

\begin{proof}
Abbreviate $B:=B(A)$, set $X:=\sper A \times_{\spec A} \spec B$ and name the projection of the pullback by
$$\pi_A : X\longrightarrow \sper A$$
and
$$\pi_B : X\longrightarrow \spec B$$
Then we have the following commutative diagram in spectral spaces
$$
\begindc{\commdiag}[1]
\obj(0,50)[A]{$\sper B$}
\obj(100,50)[B]{$\sper A$}
\obj(0,0)[C]{$\spec B$}
\obj(100,0)[D]{$\spec A$}
\mor{A}{B}{$\phi_r$}
\mor{B}{D}{$\supp_A$}
\mor{A}{C}{$\supp_B$}[-1,0]
\mor{C}{D}{$\phi_p$}[-1,0]
\enddc
$$
where $\phi_r$ is the canonical map. By the universal property of the pullback there is a unique continuous map 
$\psi: \sper B \rightarrow X$ such that the diagram below commutes
$$
\begindc{\commdiag}[1]
\obj(50,50)[A]{$X$}
\obj(120,50)[B]{$\sper A$}
\obj(50,0)[C]{$\spec B$}
\obj(120,0)[D]{$\spec A$}
\obj(-20,80)[E]{$\sper B$}
\mor{A}{B}{$\pi_A$}[-1,0]
\mor{B}{D}{$\supp_A$}
\mor{A}{C}{$\pi_B$}
\mor{C}{D}{$\phi_p$}[-1,0]
\mor{E}{A}{$\psi$}[-1,0]
\mor{E}{B}{$\phi_r$}
\mor{E}{C}{$\supp_B$}[-1,0]
\enddc
$$
This Theorem claims that $\psi$ is in fact a homeomorphism.

Observe that because $B$ is an integral poring extension of $A$, one has for any $\tilde\alpha\in\sper B$ the identity 
$$\rho(A(\alpha))\cong_{A/\supp(\alpha)} \rho(B(\tilde \alpha))$$
(see also Lemma 2(i) in \cite{Capco2}) where $\alpha:=\tilde\alpha\cap A=\phi_r(\tilde\alpha)$. Now we show that $\dots$

\vspace{5mm}
\noindent\underline{$\psi$ is injective}\\
Let $\tilde\alpha,\tilde\beta\in\sper B$ such that $\psi(\tilde\alpha)=\psi(\tilde\beta)=:x$ for some $x\in X$. Then
$$\pi_A(x)=\tilde\alpha\cap A =\tilde\beta \cap A =:\alpha \in \sper A$$
for some $\alpha\in\sper A$, this implies that
$$\rho(B(\tilde \beta))\cong_{A/\supp(\alpha)}\rho(A(\alpha))\cong_{A/\supp(\alpha)} \rho(B(\tilde \alpha))$$
Also
$$\pi_B(x)=\supp_B(\tilde\alpha)=\supp_B(\tilde\beta)=\tilde\p\in\spec B$$
for some $\tilde\p\in\spec B$. But the prime cone $\tilde\alpha$ of $B$ can be regarded also as the pair 
$$(\rho(B(\tilde\alpha)),\supp_B(\tilde\alpha))=(\rho(\alpha),\tilde\p)=(\rho(B(\tilde\beta)),\supp_B(\tilde\beta))$$
see for instance \cite{KS} \S3 or Proposition 1.3 in \cite{ABR} so in fact $\tilde\alpha=\tilde\beta$.

\vspace{5mm}
\noindent\underline{$\psi$ is surjective}\\
We may regard the elements of $X$ as pairs of the form $(\alpha,\tilde\p)\in\sper A\times \spec B$ such that 
$\supp_A(\alpha)=\tilde\p\cap A=\phi_p(\tilde\p)$. Thus let $(\alpha,\tilde\p)\in X$ and let the prime
cone of $B$ associated to the pair $(\rho(A(\alpha)),\tilde\p)$ be denoted by $\tilde\alpha$, in fact specifically
$$\tilde\alpha=\{b\in B\sep b\mod\tilde\p \in \rho(A(\alpha))^+\}$$
(see for instance remark in \cite{KS} after Satz 1, p.108). Then 
$\supp_B(\tilde\alpha)=\tilde\p$ and $\tilde\alpha\cap A=\phi_r(\tilde\alpha)=\alpha$ and therefore (by the definition 
of $X$) we get $\psi(\tilde\alpha)=(\alpha,\tilde\p)$. 

\vspace{5mm}
It is easy to see that $X$ is also a Stone space, therefore we have a continuous bijection $\psi$ between a compact space 
$\sper B$ and a Hausdorff space $X$. This bijection is therefore also a closed map and thus a homeomorphism.
\end{proof}

\begin{defi} Let $X$ be a topological space with topology $\mathcal T$ then
\begin{enumerate}
\item An \emph{open filter}, $\mathcal U$ on $X$ is a subset of $\mathcal T$ which is also a filter (with the usual containment as partial ordering)
\item Similarly one defines an \emph{open ultrafilter} on $X$
\end{enumerate}
\end{defi}

Below is a construction of absolutes of Hausdorff space as implemented by Porter and Woods in \cite{pw1} \S6.6 and in \cite{pw2} \S3.1.

\begin{cons}(Iliadis absolutes) Let $X$ be a Hausdorff space with topology $\mathcal T$. It is shown in \cite{pw1} \S6.6(d) that if 
$\mathcal U$ is an open ultrafilter on $X$ one has 
$$\bigcap_{U\in\mathcal{U}} U\neq \emptyset \Leftrightarrow \exists\,x\in X\st \bigcap_{U\in\mathcal{U}}=\{x\}$$

The \emph{Gleason space} of $X$, denoted $\theta X$, consists of the set of all open ultrafilters on $X$ equipped with a topology generated
by the open basis consisting of the sets of the form
$$\{\mathcal U\in \theta X \sep U\in\mathcal U\}\qquad U\in \mathcal T$$

The \emph{Iliadis absolute} or \emph{absolute} of $X$ is defined by 
$$\mathcal E X :=\{\mathcal U\in \theta X \sep \bigcap_{U\in\mathcal{U}} U\neq \emptyset\}$$
and it is equipped with the subspace topology of $\theta X$. It is shown in \cite{pw1} \S6.6(e) that $\mathcal E X$ is Stone and extremally
disconnected. 

There is a surjection from $\mathcal E X$ to $X$, which we shall call the \emph{projection of the absolute of $X$} and 
denote it by $\pi_X$ which is defined by 
$$\pi_X : \mathcal E X \lthrarrow X \qquad \pi(\mathcal U) :=\bigcap_{U\in\mathcal{U}}U \quad\forall\mathcal U\in \mathcal E X$$
It is shown in \cite{pw1} \S6.6(e)(6) that $X$ is regular (as topological space) iff $\pi_X$ is continuous. In particular if $X$ is Stone
then $\pi_X$ is a continuous map.
\begin{flushright}$\blacksquare$\end{flushright}
\end{cons}

\begin{defi}
A function $f: X\rightarrow Y$ between two topological spaces is called an \emph{irreducible surjection} iff the function
is continuous, surjective, closed and for any proper closed set $C\subsetneq X$ we have $f(C)\subsetneq Y$.
\end{defi}

The above definition can be found in \cite{pw1} 6.5(a). However, when discussing about a function having the property 
in the above definition we always accompany the word \emph{irreducible} with the word \emph{surjection} in order to avoid confusion 
(because "irreducible" is very frequently used in mathematics and could mean many different things).

\begin{lemma}\label{baer_ired}
Let $A$ be a von Neumann regular ring, then the canonical map $\phi: \spec B(A) \rightarrow \spec A$ is an 
irreducible surjection
\end{lemma}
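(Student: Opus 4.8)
The plan is to verify each of the four properties in the definition of \emph{irreducible surjection} for $\phi$: continuity, surjectivity, closedness, and the irreducibility condition that $\phi(C)\subsetneq\spec A$ for every proper closed $C\subsetneq\spec B(A)$. Continuity of $\phi$ is automatic since $\phi$ is the spectral map induced by the inclusion $A\hookrightarrow B(A)$, and surjectivity is exactly \cite{raphael} Lemma 1.14 (already invoked in the proof of Theorem \ref{nearopen}). For closedness, I would use the standard general-topology fact that a continuous map from a compact space to a Hausdorff space is closed: $\spec B(A)$ is compact (it is a Stone space, being the spectrum of a regular ring) and $\spec A$ is Hausdorff for the same reason. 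So the first three properties cost essentially nothing.

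The real content is the irreducibility condition, and here the natural strategy is to exploit near openness, which is in some sense the ``dual'' statement. Let $C\subsetneq\spec B(A)$ be a proper closed set; then $U:=\spec B(A)\setminus C$ is a nonempty open set. By Theorem \ref{nearopen}, $\phi$ is near open, so there is a nonempty open $V\subset\spec A$ with $V\subset\phi(U)$. The claim I want is $\phi(C)\subsetneq\spec A$, i.e. $\spec A\setminus\phi(C)\neq\emptyset$. Here I would use that $\spec B(A)$ is \emph{extremally disconnected} (it is the spectrum of a Baer regular ring — see Prop. 2.1 of \cite{Mewborn} or the argument in Theorem \ref{nearopen}): this lets me shrink $U$ to a nonempty \emph{clopen} set $W\subset U$. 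Then $\spec B(A)=W\mathbin{\dot\cup}(\spec B(A)\setminus W)$ with $\spec B(A)\setminus W$ clopen and containing $C$, and since $\phi$ is a closed surjection, $\phi(\spec B(A)\setminus W)$ is a closed set containing $\phi(C)$; it suffices to show this set is a proper subset of $\spec A$. Applying near openness to $W$ gives a nonempty open $V\subset\phi(W)$, and I would argue that the disjointness of $W$ from $\spec B(A)\setminus W$, together with surjectivity of $\phi$ restricted appropriately, forces a point of $V$ to fail to lie in $\phi(\spec B(A)\setminus W)$ — more carefully, one picks $v\in V$ and uses Lemma \ref{vNr+overring} / the fiber structure to locate a preimage of $v$ inside $W$ only.

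I expect the main obstacle to be precisely this last disjointness argument: near openness alone gives $V\subset\phi(W)$ but says nothing about whether points of $V$ \emph{also} have preimages outside $W$, so $\phi(W)$ and $\phi(\spec B(A)\setminus W)$ can a priori overlap. The clean way around this is to recall (from Raphael's Lemma, since $A$ being regular forces $B(A)$ regular and Baer and the spectral map a \emph{homeomorphism} when restricted to the Baer hull of $A$ — but $A$ itself need not be Baer, so one must be careful) that $\phi$ need not be injective; instead I would use that $\phi$ is near open \emph{and} that on a basis of clopens $D_{B(A)}(e)$ with $e\in E(A)$ idempotent one has $\phi(D_{B(A)}(e))=D_A(e)$ exactly (by Lemma \ref{vNr+overring}, since $e\in A$), so that for a clopen $W$ of the form $D_{B(A)}(e)$ with $e\in E(A)$ one gets $\phi(W)=D_A(e)$ and $\phi(\spec B(A)\setminus W)=\phi(D_{B(A)}(1-e))=D_A(1-e)$, which are genuinely complementary. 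The delicate point is reducing a general clopen $W\subset\spec B(A)$ to one of this special form; if that reduction is not available one instead argues abstractly: a continuous closed irreducible-candidate map that is near open and has extremally disconnected domain mapping onto a compact Hausdorff space must be irreducible, because a clopen $W$ with $\phi(W)=\spec A$ would, via near openness applied to its complement, produce a nonempty open $V'\subset\phi(\spec B(A)\setminus W)$ whose closure is clopen and whose $\phi$-preimage, by the computation $\phi(\overline{\phi^{-1}(V')})=\overline{V'}$ from Theorem \ref{nearopen}, is forced to meet $W$ — yielding, after chasing idempotents, the contradiction that $A$ fails to be dense in $B(A)$.
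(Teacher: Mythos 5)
Your handling of continuity, surjectivity and closedness matches the paper, and you have correctly located the crux in the irreducibility condition; but the step you yourself flag as ``delicate'' is exactly where the proof is missing, and your fallback for it does not work. The abstract principle you invoke at the end --- that a continuous closed near-open surjection from an extremally disconnected Stone space onto a Stone space must be irreducible --- is false: take two disjoint copies of a Stone space $X$ mapping identically onto $X$; this map is even open, closed and surjective with extremally disconnected domain, yet the image of one copy (a proper clopen set) is all of $X$. So near openness, which only gives $V\subset\phi(W)$, can never by itself rule out that points of $V$ also have preimages in the complement of $W$; some input expressing that $B(A)$ is a \emph{tight} extension of $A$ is indispensable, and your sketch never actually deploys it.

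The missing ingredient is the density of $A$ in its Baer hull (rational extension), and with it the reduction you worry about is in fact available --- not by showing a given clopen $W$ equals some $D_{B(A)}(e)$ with $e\in E(A)$, but by shrinking: given a proper closed $C\subsetneq\spec B(A)$, its complement contains a nonempty basic open $D_{B(A)}(b)$, $b\in B(A)\wo{0}$; since $B(A)$ is a ring of quotients of the semiprime ring $A$ there is $a\in A$ with $ab\in A\wo{0}$, and regularity of $A$ gives an idempotent $e\in A\wo{0}$ with $D_{B(A)}(ab)=D_{B(A)}(e)\subset D_{B(A)}(b)$. Then $C\subset V_{B(A)}(e)=D_{B(A)}(1-e)$ with $1-e\in A$, so Lemma \ref{vNr+overring} gives $\phi(C)\subset D_A(1-e)=V_A(e)\subsetneq\spec A$ because $e\neq 0$ and $A$ is reduced. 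This is precisely the paper's argument (phrased there as: assume $\phi(C)=\spec A$ with $C=V_{B(A)}(b)$, pick $a$ with $ab\in A\wo{0}$, and deduce $\spec A=\phi(V_{B(A)}(ab))=V_A(ab)$, forcing $ab=0$, a contradiction); note it is the same density trick that proves near openness in Theorem \ref{nearopen}, so quoting near openness as a black box both loses the information you need and hides the tool that closes the gap.
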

\begin{proof}
That $\phi$ is continuous and closed is clear (because $\spec A$ is Hausdorff and $\spec B(A)$ is compact), 
it is also clearly surjective (see for instance \cite{raphael} Lemma 1.14). Suppose now that there is a closed set
$C\subsetneq \spec B(A)$ such that $\phi(C)=\spec A$. Without loss of generality we may assume 
$C$ to be of the form $V_{B(A)}(b)$ for some $b\in B(A)\wo{0}$ ($\spec B(A)\backslash C$ is open, so there is a nonempty basic open
set contained in it). Now because $B(A)$ is a rational extension of $A$, there is an $a\in A$ such that $ba\in A\wo{0}$. We
know by Lemma \ref{vNr+overring} that $\phi(V_{B(A)}(ab))=V_A(ab)$ (because we have a regular
ring, we can express $V_{B(A)}(ab)=D_{B(A)}(x)$ for some $x\in A$) and so
$$V_{B(A)}(b)\subset V_{B(A)}(ab) \Rightarrow \spec A =\phi(V_{B(A)}(b))\subset \phi(V_{B(A)}(ab))=V_A(ab)=\spec A \Rightarrow ab=0$$
which is a contradiction.
\end{proof}

Because the above Lemma only uses the fact that $B(A)$ is a rational extension of $A$, we can use the same proof to show

\begin{cor}\label{vNr_ired}
Let $A$ be a von Neumann regular ring and let $B$ be a rational extension of $A$, then the canonical map 
$$\spec B \longrightarrow \spec A$$
is an irreducible surjection.
\end{cor}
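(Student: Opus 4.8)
The plan is to transcribe, essentially word for word, the proof of Lemma~\ref{baer_ired}: the only feature of the Baer hull that was used there is that $B(A)$ is a rational extension of $A$, so the argument goes through unchanged for an arbitrary rational extension $B$.

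First I would dispose of the easy properties. The canonical map $\phi\colon\spec B\to\spec A$, $\mathfrak p\mapsto\mathfrak p\cap A$, is the map on spectra induced by the inclusion $A\lhrarrow B$, hence continuous, and it is surjective by \cite{raphael} Lemma~1.14 (alternatively one may invoke Lemma~\ref{vNr+overring}, which gives $\phi(D_B(a))=D_A(a)$ for every $a\in A$, and observe that the sets $D_B(a)$, $a\in A$, cover $\spec B$ since $1\in A$). Since $A$ is von Neumann regular, $\spec A$ is a Stone space, in particular Hausdorff, while $\spec B$ is quasi-compact; a continuous map from a quasi-compact space to a Hausdorff space is closed. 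Thus $\phi$ is a continuous closed surjection, and only irreducibility remains.

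For irreducibility I would argue by contradiction: suppose a proper closed set $C\subsetneq\spec B$ has $\phi(C)=\spec A$. As $\spec B\setminus C$ is nonempty open it contains some basic open $D_B(b)$ with $0\neq b\in B$, so $C\subseteq V_B(b)\subsetneq\spec B$ and $\phi(V_B(b))=\spec A$ too; hence we may take $C=V_B(b)$. Because $B$ is a rational extension of $A$ and $A$ is semiprime and commutative there is an $a\in A$ with $ab\in A\setminus\{0\}$ (see \cite{roqrof} Theorem following Lemma~1.5). Since $ab\in A$ and $A$ is von Neumann regular we have $\langle ab\rangle=\langle e\rangle$ for some idempotent $e\in A$, a relation that persists in $B$, so $V_B(ab)=V_B(e)=D_B(1-e)$ with $1-e\in A$; applying Lemma~\ref{vNr+overring} to the element $1-e$ yields $\phi(V_B(ab))=V_A(ab)$. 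Now $ab\in\langle b\rangle$ gives $V_B(b)\subseteq V_B(ab)$, whence
$$\spec A=\phi(V_B(b))\subseteq\phi(V_B(ab))=V_A(ab)\subseteq\spec A,$$
so $V_A(ab)=\spec A$; then $ab$ lies in every prime of $A$, hence is nilpotent, hence $0$ since $A$ is reduced, contradicting $ab\neq 0$.

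I do not expect a genuine obstacle: the proof is a direct copy of that of Lemma~\ref{baer_ired}. The only point requiring attention is bookkeeping about \emph{where} von Neumann regularity enters — it is used only for $A$ (so that $\spec A$ is Hausdorff, $A$ is reduced, and the finitely generated ideal $\langle ab\rangle$ is generated by an idempotent), and never for $B$, of which we use nothing beyond its being a rational overring of $A$; in particular $\spec B$ is treated merely as a quasi-compact space.
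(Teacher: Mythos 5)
Your proposal is correct and is exactly the paper's intended argument: the paper proves this corollary by observing that the proof of Lemma~\ref{baer_ired} only used that $B(A)$ is a rational extension of $A$, and you have transcribed that proof verbatim (even filling in the parenthetical step $V_B(ab)=D_B(1-e)$ with $1-e\in A$ before applying Lemma~\ref{vNr+overring}).
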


\begin{prop} If $A$ is a von Neumann regular ring, then there is a homeomorphism 
$$\psi:\mathcal E\spec A \rightarrow \spec B(A)$$ 
such that $\phi\circ \psi =\pi_{\spec A}$, where 
$$\phi:\spec B(A)\rightarrow \spec A$$
is just the canonical map.
\end{prop}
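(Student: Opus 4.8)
The plan is to construct the homeomorphism $\psi : \mathcal E\spec A \to \spec B(A)$ by invoking the universal property (up to homeomorphism) of the absolute: by Porter--Woods (\cite{pw1} 6.6(e)), for any Stone space $X$ the projection $\pi_X : \mathcal E X \to X$ is characterised, up to homeomorphism over $X$, as the essentially unique irreducible surjection from a Stone extremally disconnected space onto $X$. First I would take $X := \spec A$, which is Stone; by Lemma \ref{baer_ired} the canonical map $\phi : \spec B(A)\to \spec A$ is an irreducible surjection, and $\spec B(A)$ is Stone (since $B(A)$ is von Neumann regular) and extremally disconnected (since $B(A)$ is Baer, see Prop. 2.1 of \cite{Mewborn}). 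Both $\phi$ and $\pi_{\spec A}$ are therefore irreducible surjections from Stone, extremally disconnected spaces onto the same Stone space $\spec A$, so the uniqueness theorem for absolutes gives a homeomorphism $\psi : \mathcal E\spec A \to \spec B(A)$ with $\phi\circ\psi = \pi_{\spec A}$, which is exactly the claim.

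Alternatively, if one prefers an explicit map rather than citing the uniqueness theorem as a black box, I would define $\psi$ directly on open ultrafilters. Since $\spec B(A)$ is extremally disconnected, every point $\tilde\p \in \spec B(A)$ determines a unique open ultrafilter on $\spec A$, namely $\mathcal U_{\tilde\p} := \{\,U \subseteq \spec A \text{ open} : \tilde\p \in \operatorname{int}\overline{\phi^{-1}(U)}\,\}$ (equivalently, the trace on $\spec A$ of the clopen-neighbourhood filter of $\tilde\p$ pushed forward along the near-open, irreducible $\phi$); because $\phi$ is an irreducible surjection this lands in $\mathcal E\spec A$ and $\bigcap_{U\in\mathcal U_{\tilde\p}} U = \{\phi(\tilde\p)\}$, so $\pi_{\spec A}(\mathcal U_{\tilde\p}) = \phi(\tilde\p)$. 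I would then check $\psi^{-1} := (\tilde\p \mapsto \mathcal U_{\tilde\p})$ is a continuous bijection: surjectivity and injectivity follow from irreducibility of $\phi$ (each open ultrafilter of $\mathcal E\spec A$ converges to a point whose unique $\phi$-preimage compatible with the ultrafilter is forced), and continuity is immediate from the definition of the basic opens $\{\mathcal U : U \in \mathcal U\}$ of $\theta\spec A$ together with the fact that $\phi^{-1}$ of an open set has clopen interior in $\spec B(A)$. A continuous bijection from the compact $\spec B(A)$ to the Hausdorff $\mathcal E\spec A$ is a homeomorphism, and unwinding the definitions gives $\phi\circ\psi = \pi_{\spec A}$.

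The relation $\phi\circ\psi = \pi_{\spec A}$ itself needs a brief verification on the nose in either approach: chasing a point $\mathcal U \in \mathcal E\spec A$ through $\psi$ to $\spec B(A)$ and then through $\phi$ back to $\spec A$ must recover $\bigcap_{U\in\mathcal U} U$. In the uniqueness-theorem approach this is part of the conclusion one quotes; in the explicit approach it is the identity $\bigcap_{U\in\mathcal U_{\tilde\p}} U = \{\phi(\tilde\p)\}$ noted above.

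The main obstacle is really a bookkeeping one: pinning down precisely which formulation of the Porter--Woods uniqueness theorem for absolutes is being used, and making sure the two ingredients it requires — that $\spec B(A)$ is extremally disconnected, and that $\phi$ is an irreducible surjection — are both genuinely in hand (the former from Baer-ness of $B(A)$ via \cite{Mewborn}, the latter being exactly Lemma \ref{baer_ired}). Once those are lined up, the proof is short; the only genuine mathematical content beyond citation is the explicit check (if one goes that route) that the open ultrafilter attached to a point of the extremally disconnected space $\spec B(A)$ converges, which is where irreducibility of $\phi$ does the work.
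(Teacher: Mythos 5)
Your first argument is exactly the paper's proof: the paper also deduces the statement immediately from Lemma \ref{baer_ired} (irreducibility of $\phi$), the regularity of the Stone space $\spec A$, and the Porter--Woods uniqueness characterisation of the absolute as the essentially unique irreducible preimage by an extremally disconnected compact space (the extremal disconnectedness of $\spec B(A)$ coming from Baer-ness via Mewborn). The explicit ultrafilter construction you sketch as an alternative is not needed and does not appear in the paper, but the citation-based route you lead with is the intended one and is correct.
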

\begin{proof}
This is an immediate consequence of the above Lemma, the fact that $\spec A$ is a regular space (since it is Stone), \cite{pw1} \S6.1(a) and 
\cite{pw1} \S4.8(h)(3).
\end{proof}

So we do see that no direct information of the ring structure of $B(A)$ (for a real regular ring $A$) is necessary to obtain information 
about the topological space
$\spec B(A)$ and $\sper B(A)$, the only information we needed for these topological spaces were those of $\spec A$ and $\sper A$.

Now we try to classify the real closure $*$ of an arbitrary real von Neumann regular ring. One may expect a combination of Lemma 
\ref{rcs_of_BHull} and a modification of Theorem \ref{classify_Baer}, however the result is rather more complicated than just that. 
We may indeed argue that the set of real closure $*$ of a real regular ring, say $A$, is the same as the set of real closure $*$ of its Baer hull 
(from Lemma \ref{rcs_of_BHull}). But we are dealing here with the sets with an equivalence relation that identify the real closure $*$ 
of $A$ upto $A$-isomorphisms. And we are not aware whether $B(A)$-isomorphism and $A$-isomorphism of the real closure $*$ are equivalent.
What has been just discussed is best illustrated by the following Proposition (and its proof).

\begin{prop} Let $A$ be a real regular ring and set $B$ to be the Baer hull of $A$. Define now the following 
\begin{enumerate}
\item $\mathcal S:=\{s:\spec B\rightarrow \sper B \sep s \textrm{ is a continuous section of } \supp_B\}$
\item $\mathcal C:=\{C\sep C \textrm{ is a real closure $*$ of } A\}=\{C\sep C \textrm{ is a real closure $*$ of } B\}$
\item $\pi_r : \sper B \rightarrow \sper A \qquad \tilde\alpha\mapsto \tilde\alpha\cap A$
\item an equivalence relation $\sim$ on $\mathcal S$
$$s\sim t \Leftrightarrow \supp_A(\pi_r(s(\spec B))\cap \pi_r(t(\spec B)))=\spec A\qquad (s,t\in\mathcal S)$$
\end{enumerate}
Then there is a bijection
$$\Phi:\mathcal S/\sim\,\bij \mathcal C/\cong_A$$
\end{prop}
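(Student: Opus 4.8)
The plan is to deduce this from Theorem~\ref{classify_Baer} applied to the Baer hull $B$. First I would record that $B=B(A)$ is itself a real Baer von Neumann regular ring: it is regular and Baer by construction, and it is real because $\sper B\cong\sper A\times_{\spec A}\spec B\neq\emptyset$ by Theorem~\ref{sperB}. Hence Theorem~\ref{classify_Baer}, applied with $B$ in the role of the ring, furnishes a bijection between $\mathcal S$ (which is precisely the set of continuous sections of $\supp_B$) and the set of real closure $*$ of $B$ modulo $B$-isomorphism. Since every $B$-isomorphism between real closure $*$ of $B$ is also an $A$-isomorphism, and since $\mathcal C$ is simultaneously the set of real closure $*$ of $A$ and of $B$ (Lemma~\ref{rcs_of_BHull}), this assigns to each $s\in\mathcal S$ a real closure $*$ $C_s$ of $A$, well defined up to $A$-isomorphism, characterised (by the construction in the proof of Theorem~\ref{classify_Baer}, read with $B$ in place of $A$) by
$$C_s/\tilde\p C_s\cong_{B/\tilde\p}\rho(B(s(\tilde\p)))\qquad\text{for all }\tilde\p\in\spec B.$$
I then set $\Phi([s]_\sim):=[C_s]_{\cong_A}$, so that the whole argument reduces to the single equivalence
$$s\sim t\iff C_s\cong_A C_t\qquad(s,t\in\mathcal S),$$
which gives at once that $\Phi$ is well defined and injective (and, as a by-product, that $\sim$ is an equivalence relation, being the fibres of $s\mapsto[C_s]_{\cong_A}$).

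To prove this equivalence I would, for a fixed $x\in\spec A$, describe the factor fields of $C_s$ over $x$ as $A/x$-algebras. Under the canonical homeomorphism $\spec C_s\cong\spec B$ (Raphael's Lemma) the primes of $C_s$ over $x$ correspond to the primes $\tilde\p$ of $B$ with $\tilde\p\cap A=x$, and by the characterising property the factor field over such a $\tilde\p$ is $\cong_{B/\tilde\p}\rho(B(s(\tilde\p)))$. Because $B$ is integral over $A$, this is a real closed field algebraic over $A/x$, and its unique ordering restricts on $A/x$ to the ordering given by the prime cone $s(\tilde\p)\cap A=\pi_r(s(\tilde\p))$; by uniqueness of the real closure of an ordered field it is therefore $\cong_{A/x}\rho(A(\pi_r(s(\tilde\p))))$. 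Since $\rho(A(\alpha))\cong_{A/x}\rho(A(\beta))$ forces $\alpha=\beta$ (the ordering on $A/x$, hence the prime cone of support $x$, is recovered from the $A/x$-algebra), the set of $A/x$-isomorphism classes of factor fields of $C_s$ over $x$ is carried bijectively onto $\{\pi_r(s(\tilde\p))\sep\tilde\p\cap A=x\}=\pi_r(s(\spec B))\cap\supp_A^{-1}(x)$; here one uses that $\supp_A(\pi_r(s(\tilde\p)))=\supp_B(s(\tilde\p))\cap A=\tilde\p\cap A$ because $s$ is a section of $\supp_B$, and Theorem~\ref{sperB} to make the bookkeeping of $\pi_r$ on the pullback transparent.

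Granting this, $C_s$ and $C_t$ admit a common factor field over $x$ (as $A/x$-algebras) exactly when $\pi_r(s(\spec B))\cap\pi_r(t(\spec B))\cap\supp_A^{-1}(x)\neq\emptyset$, i.e.\ when $x\in\supp_A(\pi_r(s(\spec B))\cap\pi_r(t(\spec B)))$; so ``$C_s,C_t$ share a factor field over every $x\in\spec A$'' is literally the defining condition for $s\sim t$. On the other hand Theorem~8 of \cite{Capco2} --- the same instrument used in Lemma~\ref{Baerprime} --- asserts that two real closure $*$ of $A$ are $A$-isomorphic if and only if over every $x\in\spec A$ they possess a common factor field: one implication is the contrapositive of Property $\star$, and the other is immediate, an $A$-isomorphism carrying the factor fields over $x$ onto the factor fields over $x$. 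Combining the two descriptions yields $s\sim t\iff C_s\cong_A C_t$.

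It remains to see that $\Phi$ is surjective, which is immediate: given a real closure $*$ $C$ of $A$, it is a real closure $*$ of $B$ by Lemma~\ref{rcs_of_BHull}, so Theorem~\ref{classify_Baer} applied to $B$ supplies an $s\in\mathcal S$ with $C\cong_B C_s$, hence $C\cong_A C_s$ and $\Phi([s]_\sim)=[C]_{\cong_A}$. The only genuinely delicate points are the factor-field bookkeeping --- making sure ``over $x$'' is read on $\spec A$ and that the (possibly non-singleton) fibres of $\spec B\rightarrow\spec A$ are tracked correctly --- and the precise transfer of Theorem~8 of \cite{Capco2} to the non-Baer situation; both are modelled faithfully on the already established Lemma~\ref{Baerprime}.
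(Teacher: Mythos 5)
Your proposal is correct and takes essentially the same route as the paper: apply Theorem~\ref{classify_Baer} to $B=B(A)$, fix representatives $C_s$ with $C_s/\tilde\p C_s\cong\rho(B(s(\tilde\p)))$, reduce everything (well-definedness, injectivity, and the fact that $\sim$ is an equivalence relation) to the single equivalence $s\sim t\Leftrightarrow C_s\cong_A C_t$ proved via Theorem~8 of \cite{Capco2} together with the identification of the factor fields over $x\in\spec A$ with the real closures $\rho(A(\pi_r(s(\tilde\p))))$, and obtain surjectivity from Lemma~\ref{rcs_of_BHull}. The only blemish is your parenthetical justification that $B$ is real: $\sper B\neq\emptyset$ only gives semireality (consider $\R\times\C$), whereas realness of $B$ follows instead from Lemma~\ref{rcs_of_BHull}, which embeds $B$ as a subporing of a real closed, hence real, ring --- a point the paper itself passes over silently, so this does not affect the substance of your argument.
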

\begin{proof}
Define first $\Phi_B : \mathcal S \rightarrow \mathcal C/\cong_B$ to be the bijection between the continuous sections of $\supp_B$
and the real closure $*$ of $B$ upto $B$-isomorphism as shown in Theorem \ref{classify_Baer}. Now for any $s\in\mathcal S$ 
set $C_s$ to be any chosen ring in $\mathcal C$ such that $\Phi_B(s)=C_s/\cong_B$ (throghout, as we are dealing with $A$-isomorphisms our
proof will be independent of the choice of this $C_s$ for any $s$). 

We now need to first show that $\sim$ is actually an equivalence relation on $\mathcal S$. The only difficult problem 
actually lies on proving transitivity. We claim that 
$$s\sim t \Leftrightarrow C_s\cong_A C_t\qquad (s,t\in\mathcal S)$$
(independent of the choices of $C_s$ and $C_t$) and if we show this then we have also shown that $\sim$ is an equivalence 
relation. Let $s,t\in \mathcal S$ then 
%Jose
$$
\begin{array}{c}
s\sim t \\
\Updownarrow \\
\forall\p\in\spec A \,\,\exists\tilde\p_s,\tilde\p_t\in\spec B \textrm{ and } \alpha\in\sper A \st \\
\tilde\p_s\cap A=\tilde\p_t\cap A =\supp_A(\alpha)=\p \\
\textrm{ and } s(\tilde\p_s)\cap A = t(\tilde\p_t)\cap A = \alpha \\ 
\Updownarrow \\
\forall\p\in\spec A \,\,\exists\tilde\p_s,\tilde\p_t\in\spec B \textrm{ and } \alpha\in\sper A \st \\
\tilde\p_s\cap A=\tilde\p_t\cap A =\supp_A(\alpha)=\p \\
\textrm{ and } C_s/\tilde\p_sC_s \cong_{A/\p} C_t/\tilde\p_tC_t\cong_{A/\p}\rho(B(s(\tilde\p_s)))
\cong_{A/\p}\rho(B(t(\tilde\p_t)))\cong_{A/\p}\rho(A(\alpha))\\
\Updownarrow \\
C_s\cong_A C_t
\end{array}
$$

Now for any $s\in\mathcal S$ let us denote $\bar s$ to be the image of $s$ in $\mathcal S/\sim$. For such an $s$ we now set
$$\Phi(\bar s) = C_s/\cong_A$$
and we show that $\Phi$ defined in such way is $\dots$

\vspace{5mm}
\noindent\underline{well-defined}. Let $s,t\in \mathcal S$ and $s\sim t$. Then by our previous claim this is equivalent to
$C_s\cong_A C_t$. And thus $\Phi$ is indeed well-defined.

\vspace{5mm}
\noindent\underline{bijective}. Injectivity is almost clear, because if $\Phi(\bar s)=\Phi(\bar t)$ for some $s,t\in \mathcal S$ then
by construction of $\Phi$ we get $C_s\cong_A C_t$ which by our very first claim implies that $s\sim t$. 

Surjectivity is due to the fact that for any real closure $*$ of $A$ say $C$, there is an $s\in\mathcal S$ such that $\Phi_B(s)=C/\cong_B$.
And we thus have $\Phi(s)=C/\cong_A$.
\end{proof}

\begin{ack}
I would like to thank Prof. Niels Schwartz for his most valuable advises.
\end{ack}

\end{document}